\newcommand{\lap}{\bigtriangleup}
\newcommand{\C}{C_k}
\renewcommand{\S} {\mathcal{S}}
\newcommand{\bigO} {\mathcal{O}}
\newcommand{\e}{\varepsilon}
 \journalname{}
\begin{document}

\title{Integral equation methods for the Yukawa-Beltrami equation on
the sphere 
}



\author{M.C.~Kropinski    \and
        N.~Nigam          \and
        B.~Quaife}

\authorrunning{Kropinski, Nigam, Quaife} 

\institute{M.C.~Kropinski \at
              Department of Mathematics, Simon Fraser University \\
              \email{mkropins@math.sfu.ca}           
           \and
           N.~Nigam \at
              Department of Mathematics, Simon Fraser University \\
              \email{nigam@math.sfu.ca}           
          \and
          B.~Quaife \at 
              Institute for Computational and Engineering and Sciences, University of Texas \\
              Tel.: +1-512-232-3509 \\
              Fax: +1-512-471-8694 \\
              \email{quaife@ices.utexas.edu}
}


\maketitle

\begin{abstract}
An integral equation method for solving the Yukawa-Beltrami equation on
a multiply-connected sub-manifold of the unit sphere is presented. A
fundamental solution for the Yukawa-Beltrami operator is constructed.
This fundamental solution can be represented by conical functions.
Using a suitable representation formula, a Fredholm equation of the
second kind with a compact integral operator needs to be solved.  The
discretization of this integral equation leads to a linear system whose
condition number is bounded independent of the size of the system.
Several numerical examples exploring the properties of this integral
equation are presented. 
\keywords{Yukawa-Beltrami boundary value problems \and Integral equations}
 \subclass{35J25 \and 45B05}
\end{abstract}

\section{Introduction}

Applications of partial differential equations (PDEs) on surfaces and
manifolds include image processing, biology, oceanography, and fluid
dynamics~\cite{Chaplain,Myers,Witkin}.  Since solutions of these PDEs
depend both on local {\it and} global properties of a given
differential operator on the manifold, standard numerical
discretization methods developed for PDEs in the plane or in
$\mathbb{R}^3$ need to be modified. Recent work in this direction
includes the closest point method~\cite{ruuth}, surface
parametrization~\cite{floater}, embedding functions,~\cite{Bertalmio},
and projections onto an approximation of the manifolds by tesselations
of simpler, non-curved domains (such as triangles)~\cite{lindblom}. 

It is well-known that for elliptic PDEs in $\mathbb{R}^2$ or
$\mathbb{R}^3$, numerical methods based on integral equation
formulations can offer significant advantages: efficiency is achieved
through dimension reduction, and superior stability of such methods
allows highly accurate solutions to be computed.  In addition, the
development of efficient numerical techniques such as the fast
multipole method or fast direct solvers have made integral equation
approaches significantly faster than many other currently available
schemes.  Relatively little work has been done, however, on using
integral equation methods for numerically investigating elliptic PDEs
on subsurfaces of manifolds.  Some prior work in this direction for the
Laplace-Beltrami operator on the surface $\cal{S}$ of the unit sphere
was presented in~\cite{gemmrich,kro:nig2013}.

In this paper we present a reformulation of a boundary value problem
for the Yukawa-Beltrami equation on the surface of the unit sphere
$\S$  in terms of boundary integral equations. Concretely, let $\Omega$
denote a sub-manifold of $\S$, and let $\Gamma$ denote its boundary. By
this we mean that $\Gamma$ is a closed curve  on $\S$ which divides
$\S$ into two (not necessarily connected) parts $\Omega$ and
$\Omega^{c}$. In particular, $\Gamma = \partial\Omega$ is the boundary
curve of $\Omega$. We wish to solve the Yukawa boundary value problem:
\begin{subequations}
\label{bigmodel}
  \begin{align}
    -\lap_\S u(x)+k^2  u(x)&= f({ x}), && \mbox{for} \: x \in \Omega, \\
    u(x)&=g(x), &&\mbox{for} \; x \in \Gamma. 
  \end{align}
\end{subequations}
Here $\lap_{\S}$ is the Laplace-Beltrami operator on $\S$ and $k>0 \in
\mathbb{R}$ is constant. This  problem arises in the context of solving
the isotropic heat equation via Rothe's method. By first applying a
semi-implicit time discretization to the heat equation,  time-stepping
then involves repeatedly solve an elliptic PDE of the
form~\eqref{bigmodel}, where $k^2 = O(1/\Delta t)$.  This approach is
discussed in~\cite{rothe:heat} for the heat equation in the plane. 

We recall that the exterior boundary value problem for the Yukawa
operator $(-\lap + k^2)$  in $\mathbb{R}^2$ is well-posed if we seek
$H^1$ (weak) solutions, even without the specification of a radiation
condition (see, e.g.,~\cite{gatica}). This is in contrast to the
exterior problem for the Laplacian.  In~\cite{gemmrich}, it was observed
that the single-layer operator for the Laplace-Beltrami did not satisfy
the associated boundary value problem on $\S$ unless a further
constraint was satisfied. Experience with the Yukawa operator in
$\mathbb{R}^2$~\cite{kro:qua2011,qua2011} suggests that any issues
concerning unique solvability which arise for the Laplace-Beltrami
operator when we move to a compact manifold will be ameliorated for the
Yukawa-Beltrami operator.  We shall see this is indeed the case: it is
possible, provided $k>\frac{1}{2}$, for the single-layer operator to
exactly satisfy the boundary value problem. (In case we solve the
Yukawa-Beltrami problem on a sphere of radius $R$, we require $kR>1/2$.)

To simplify the exposition and analysis, we shall concentrate on
locating the homogeneous solution of~\eqref{bigmodel}.  In other words:
{\it Find a smooth $u$ such that for given smooth Dirichlet data $g$}
\begin{subequations}
  \label{DBVP}
  \begin{align}
    -\lap_{\S} u(x) +k^2u(x)\, &= \, 0, &&\mbox{for} \; 
      x \in { \Omega},\\
    u(x) &= g(x), &&\mbox{for} \; x \in \Gamma.
  \end{align}
\end{subequations}
We note that we could equivalently have chosen to study the Neumann or
Robin problem for the system.  We wish to solve~\eqref{DBVP} by
reformulating this boundary value problem as an integral equation. As is
expected, the process of reformulation is not unique; we shall be
employing a layer ansatz based on a parametrix for the Yukawa-Beltrami
operator, and solving an integral equation for an unknown density.  The
choice of parametrix is not unique, and we derive a particularly
convenient parametrix involving conical functions. By proceeding with a
double-layer ansatz based on this parametrix, a well-conditioned
Fredholm equation of the second kind results. Several numerical examples
are presented which illustrate the analytic properties of this integral
equation.

\subsection{Some preliminaries}
 We favour an intrinsic definition, and where possible identify $x\in \S$ by two independent variables (the spherical angles), $x=x(\phi,\theta)$. We can also describe this point in terms of a Euclidean coordinate system in $\mathbb{R}^3$ whose origin is at the center of mass of the sphere: 
\begin{align*}
 x= x(\varphi,\theta) \, \equiv \, \left( 
  \begin{array}{c}
    \cos \varphi \sin \theta \\
    \sin \varphi \sin \theta \\
    \cos \theta
  \end{array} 
  \right) \in{\S}, \quad \varphi \in [0,2\pi), 
    \quad \theta \in [0,\pi].
\end{align*}
We also recall that on $\S$, the Laplace-Beltrami operator  $\lap_\S$
is defined as
\begin{align*}
  \lap_S u(x) \, = \, \left[
  \frac{1}{\sin^2 \theta} \frac{\partial^2}{\partial \varphi^2} +
  \frac{1}{\sin \theta} \frac{\partial}{\partial \theta}
  \left(\sin \theta \frac{\partial}{ \partial \theta}\right)
  \right] u(x(\varphi,\theta)).
\end{align*}

If two points $x,y$ lie on the unit sphere, with
spherical coordinates $x=x(\phi,\theta), y=(\alpha,\beta)$ then we describe the solid angle between them (a measure of their distance in the metric on $\S$) by
\begin{align*}
  <x,y> =\cos(\phi-\alpha)\sin(\theta)\sin(\beta)+
    \cos(\theta)\cos(\beta).
\end{align*} 
In particular if $y$ is the North Pole then  $\beta=0$ and
$<x,y>=\cos(\theta)$. (If we denoted $x,y$  by their Cartesian
coordinates $(x_1,x_2,x_2),(y_1,y_2,y_3)$, then $<x,y> = \sum_{i=1}^3
x_iy_i$.) The distance between $x$ and $y$ in the 3-dimensional
Euclidean metric $\|x-y\|$ is given by 
\begin{align*}
  \|x-y\|^2 := 2-2<x,y>, \qquad \Rightarrow \qquad 
    <x,y>= 1-\frac{\|x-y\|^2}{2}.
\end{align*}

We remind the reader of some useful vectorial identities on the sphere.
Let $ \vec{e}_\theta, \vec{e}_{\varphi} $ be the usual unit vectors in
spherical coordinates.  Recall that we can define the surface gradient
of a scalar $f$ on $\S$ as
\begin{align*}
  \nabla_{\S} f(x) \,  = \, \frac{1}{\sin \theta}
  \frac{\partial f}{\partial \varphi} \,\vec{e}_{\varphi} + 
  \frac{\partial f}{\partial \theta}\, \vec{e}_{\theta}.
\end{align*}
In the same way, the surface divergence for a vector-valued
function $\vec{V}$ on the sphere can be written as
\begin{align*}
  \mbox{div}_{\S} \vec{V}(x) \, = \, 
  \frac{1}{\sin \theta} \left(
  \frac{\partial}{\partial \varphi}
  V_{\varphi}(\varphi,\theta) +
  \frac{\partial}{\partial \theta} ((\sin\theta) \, 
  V_{\theta}(\varphi,\theta) ) \right).
\end{align*}
We easily see the identity $\lap_{\S}u(x) \, = \,
\mbox{div}_{\S}\nabla_{\S} u(x)$.  The vectorial surface rotation for a
scalar field $f$ on the sphere is 
\begin{align*}
  \underline{\mbox{curl}}_{\S} f(x) \, = -\, 
  \frac{\partial f}{\partial \theta} \,\vec{e}_{\varphi} + 
  \frac{1}{\sin\theta}\frac{\partial f}{\partial \varphi}\, 
  \vec{e}_{\theta},
\end{align*}
and the (scalar) surface rotation of a vector field $\vec{V}$ is
\begin{align*}
  \mbox{curl}_{\S} \vec{V}(x) \, = \,
  \frac{1}{\sin \theta} \left(
  - \frac{\partial}{\partial \varphi} V_{\theta}(\varphi,\theta) +
  \frac{\partial}{\partial \theta} ((\sin \theta) \,   
  V_{\varphi}(\varphi,\theta)) \right).
\end{align*}
We then obtain another vectorial identity for the Laplace-Beltrami
operator:
\begin{align*}
  \lap_{\S} u(x) \, = \, - \mbox{curl}_{\S} 
  \underline{\mbox{curl}}_{\S} u(x) \quad \mbox{for} \; x \in {\S}.
\end{align*}

Stoke's theorem for the  smooth positively oriented curve $\Gamma$ and
the enclosed region $\Omega$ may be written  as
\begin{align*}
  \int\limits_{\Omega} \mbox{curl}_{\S} \vec{V}(x) d\sigma_x \, = \,
  \int\limits_\Gamma \vec{V}(x) \cdot \vec{t}(x) \, ds_x .
\end{align*}
Here, $\vec{t}$ is the unit tangent vector to $\Gamma$, $d\sigma$ is an
area element, and $ds$ is an arclength element. We note that a similar
identity holds for the region $\Omega^{c}$, with care taken with the
orientation of the tangent.  Now, setting $\vec{V} = v(x) \vec{W}(x)$
and applying the product rule we have
\begin{align*}
  \int\limits_{\Omega} \underline{\mbox{curl}}_{\S} v(x) \cdot
  \vec{W}(x) \, d\sigma_x \, = \, 
  -\int\limits_{\Gamma} v(x) [\vec{W}(x) \cdot \vec{t}(x)] ds_x +
  \int\limits_{\Omega} v(x) \mbox{curl}_{\S} \vec{W}(x) d\sigma_x .
\end{align*}
With $\vec{W}(x) = \underline{\mbox{curl}}_{\S} u(x)$ we finally
obtain Green's first formula for the Laplace-Beltrami operator
$\lap_{\S}$,
\begin{align}
  \label{Green1}
  -\int\limits_{\Omega} \underline{\mbox{curl}}_{\S} v(x) \cdot
  \underline{\mbox{curl}}_{\S} u(x) \, d\sigma_x 
  =& \int\limits_{\Gamma} v(x) [\underline{\mbox{curl}}_S u(x) \cdot 
  \underline{t}(x)] ds_x +\int\limits_{\Omega} v(x) \lap_{\S}u(x) d\sigma_x.
\end{align}

We obtain Green's second formula by interchanging the roles of $u$ and
$v$ in~\eqref{Green1}, subtracting the two identities, and using the
symmetry of the left hand side
\begin{align}
  &\int\limits_{\Omega} v(x)(-\lap_{\S}u(x) + k^2 u(x))-
  u(x)(-\lap_{\S}v(x) +k^2v(x))\, d\sigma_x \nonumber \\
  =&\int\limits_\Gamma [v(x)\,\underline{\mbox{curl}}_{\S} 
  u(x) - u(x)\, \underline{\mbox{curl}}_{\S}v(x)] \cdot 
  \vec{t}(x) ds_x.
  \label{Green2}
\end{align}

We shall make extensive use of these identities.

\section{A fundamental solution and representation formula}
We seek a fundamental solution for the Yukawa-Beltrami operator
$(-\lap_\S + k^2)$.  Examining first the situation for the Yukawa
operator in the Euclidean plane, the fundamental solution of the Yukawa
operator, $(-\lap + k^2)$ in $\mathbb{R}^2$ is given by
$\frac{k^2}{2\pi} K_{0}(kr)$, where $r=\|\mathbf{x} - \mathbf{x}_{0}\|$
is the distance between the source and evaluation point in the
two-dimensional Euclidean metric. Here $K_{0}$ is the modified Bessel
function of order 0 which is analytic for non-zero argument, and has a
logarithmic singularity when the source and evaluation points coincide,
that is, when $r=0$.

On the surface of the sphere $\S$, we expect the fundamental solution
$G_k(x,x_{0})$ for the Yukawa-Beltrami operator to possess a
logarithmic singularity when $x$ approaches $x_{0}$. Exactly as was
done  with the Laplace-Beltrami operator in~\cite{gemmrich}, we could
define a parametrix for this operator by using the {\it distance
measured in the Euclidean norm in $\mathbb{R}^3$}. This would suggest
using $ \frac{k^2}{2\pi}K_0(k\|x-x_{0}\|)$.  Such a choice of
parametrix would be directly related to the fundamental solution of the
two-dimensional operator; the amount by which it fails to yield a dirac
measure is directly attributable to the difference between the
spherical and flat metrics.  In particular, with $r:=\|x-x_{0}\|$, we
have
\begin{align*}
  (-\lap_{\S} + k^{2})\frac{k^2}{2\pi}K_0(kr) = \delta(r) 
  +\frac{k^{4}r^{2}}{8\pi}K_{0}(kr) - \frac{k^{3}r}{4\pi}K_{1}(kr).
\end{align*}

While this is a perfectly reasonable parametrix, it is inconvenient
from the perspective of rewriting the Yukawa-Beltrami boundary value
problem in terms of a boundary integral equation. The term
$\frac{k^{4}r^{2}}{8\pi}K_{0}(kr) - \frac{k^{3}r}{4\pi}K_{1}(kr)$ will
result in {\it volumetric} constraints  appearing in an integral
equation representation. Such a term encapsulates the fact that we are
on a compact manifold, no longer on $\mathbb{R}^2$. Recall that for the
Laplace-Beltrami case, when $k=0$, this term
$\frac{k^{4}r^{2}}{8\pi}K_{0}(kr) - \frac{k^{3}r}{4\pi}K_{1}(kr)$
reduces to a constant; it is then possible to use a double-layer ansatz
in which no volumetric terms or constraints appear~\cite{kro:nig2013}.  

To avoid such complications, we shall instead derive a more convenient
parametrix. To the best of our knowledge, the use of this parametrix
and the associated boundary integral equations is novel. 

Without loss of generality we first set $x_{0}$ to be the point at the
north pole.  Let $x$ be some other point on the sphere. The parametrix
$G_k(x,x_0)$ we seek depends on
$r_0(\theta)=\|x-x_0\|=\sqrt{2-2\cos(\theta)}$. Since there is no
angular dependence in $\phi$, the Yukawa-Beltrami operator reduces to
the second order ODE operator
\begin{align*}
  \mathcal{D}_k(u) := \frac{1}{\sin(\theta)}
  \frac{d}{d\theta}\left(\sin(\theta)
  \frac{d}{d\theta}u(r_0(\theta))\right)-k^2 u(r_{0}(\theta)).
\end{align*}
A simple change of variables allows us to rewrite $\mathcal{D}_k (u)=0$
as 
\begin{align}
  (1-z^2)\,w'' -2zw' + \left[\nu(\nu+1)\right]\,w = 0,
  \label{LegendrePequation}
\end{align}
where $\nu = \nu(k)$ satisfies $\nu(\nu + 1) = -k^2$. We note here that
the Helmholtz-Beltrami operator on the sphere would lead to to the same
equation, but with $\nu(k)$ satisfying $\nu(\nu+1)=k^{2}$. In what
follows we shall suppress the dependence of $\nu$ on $k$ when there is
no risk of confusion.

Equation~\eqref{LegendrePequation} is the well-known {\it Legendre's
equation}, which is well-defined for arbitrary real or complex $\nu$.
Therefore, we can locate two linearly independent solutions
of~\eqref{LegendrePequation}, the so-called {\it first and second kind
Legendre functions of degree $\nu(k)$}, denoted $ P_{\nu(k)}(z)$ and
$Q_{\nu(k)}(z)$, respectively. Of these, only the  LegendreP function
$P_{\nu(k)}(z)$ remains finite as $z$ approaches $1$ (to a limiting
value of 1 as we will soon see),~\cite{lebedev}.  The function
$P_{\nu(k)}(z)$ is well-defined for $|1-z|<2$.  This means that
\begin{align*}
  u(r_0(\theta)) =  P_{\nu(k)}(-\cos(\theta))
\end{align*}
is well-defined whenever $|1+\cos(\theta)|<2$, which holds for all
$\theta \in(0,\pi]$ (see Figure~\ref{f:legendreP}).  Moreover, since we
want $u(r_0(\pi))$ to be finite, we do not use the other possible
solution of~\eqref{LegendrePequation}, namely $Q_{\nu(k)}$.
\begin{figure}[htps]
  \centering
  \includegraphics[width=0.4\textwidth]{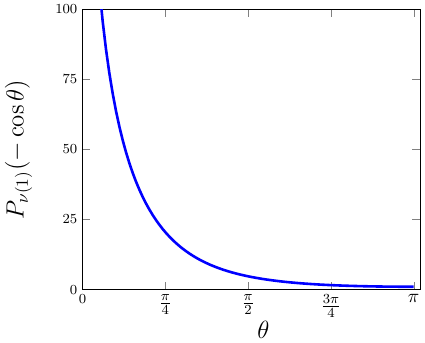}
  \caption{\label{f:legendreP} The LegendreP function
  $P_{\nu(k)}(-\cos\theta)$, for $\theta \in (0,\pi]$ and $k=1$}
\end{figure}  
We also recall that $P_{\nu}$ is well-defined for all $\nu \in
\mathbb{C}$, whereas $Q_\nu$ is well-defined for $\nu \not=
0,-1,-2,...$. 

On $\S$, from~\eqref{LegendrePequation} we obtain
\begin{align*}  
  \nu(k):=\frac{-1+\sqrt{1-4k^2}}{2}.
\end{align*}
If $0<k^2<\frac{1}{4}$ then $\nu \in (-1,0)$. If $k^2\geq\frac{1}{4}$ then 
\begin{align*}  
  \nu(k):=\frac{-1+i\sqrt{4k^2-1}}{2}.
\end{align*}

We observe that since $k^{2}>0$, $\nu(k) \notin \mathbb{Z}$. Moreover,
if $k>\frac{1}{2}$ then $Im(\nu)\not=0$. This is the regime in which we
shall work, and in what follows we shall make this assumption on $k$.
Recall that in $k^{2} = bigO(1/\Delta t)$ in the our applications of
interest.

A further substitution allows us to write~\eqref{LegendrePequation} as
a hypergeometric equation, (see, e.g., Section 7.3,~\cite{lebedev}).
This in turn allows us to write the solution of $\mathcal{D}_k(u)=0$ as 
\begin{align*} 
  u(r_0(\theta)) &= P_{\nu(k)}(-\cos(\theta)) = 
    \tensor[_2]{F}{_1}\left(-\nu(k), \nu(k)+  1; 1; 
      \frac{4-r_0^2(\theta)}{2}\right) \\
    &=\tensor[_2]{F}{_1}\left(-\nu(k), \nu(k)+  1; 1;
    \frac{1+\cos(\theta)}{2}\right).
\end{align*} 
Here, $\tensor[_2]{F}{_1}(a,b;c;z)$ is a hypergeometric function. Since
the argument $\frac{1+\cos(\theta)}{2}$ lies between $-1$ and $1$, and
$a+b = 1$, the representation in terms of the hypergeometric function is
also known as the {\it Ferrers' function of the first
kind},~\cite{fatAbramowitz}. 

\begin{figure}
  \centering
  \includegraphics[width=0.6\textwidth]{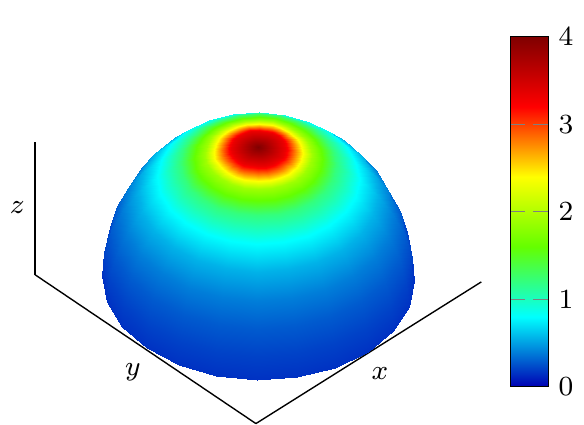}
  \caption{\label{f:greensFun} $G_1(x,x_0)$ when $x_0=(0,0,1)^T$. The
  singularity at the north pole is logarithmic. Only the top hemisphere
  is shown here for purpose of illustration}
\end{figure}
A more (computationally) convenient representation is in terms of the
{\it conical or Mehler functions} (Section 8.5 in~\cite{lebedev}). The
conical function $P_{-\frac{1}{2}+ i \tau}(z)$ of order $\tau\in
\mathbb{R}$ solves the equation
\begin{align*}
  (1-z^2)\,w'' -2zw' - \left( \tau^2+\frac{1}{4}\right) \,w = 0.
\end{align*}
In our specific case, we would pick $\tau = \tau(k)$ such that
$\tau^2+\frac{1}{4}=k^2 = -\nu(\nu+1).$  It follows that 
\begin{align*}
  \tau:=\frac{\sqrt{4k^2-1}}{2} \Rightarrow  \nu= -\frac{1}{2}+i\tau, 
\end{align*}
and the solution of $\mathcal{D}_k(u)=0$ is $
  P_{\nu} (-\cos(\theta)) = P_{-\frac{1}{2} + i \tau} 
    (-\cos(\theta)).$
The calculations above lead us then to the following definition:

\begin{definition}
\label{fundamentaldef} 
The fundamental solution for the Yukawa-Beltrami operator $-\lap_\S +
k^2$ for points $x,x_0$ on the surface of the unit sphere is 
\begin{subequations}
  \begin{align}
    G_k(x,x_{0})&:=\C P_{\nu}\left(-<x,x_{0}>\right) \nonumber \\
    &:=\C P_{\nu}\left(\frac{\|x-x_{0}\|^2}{2}-1\right) \label{LegP} \\
    &= \C\, \tensor[_2]{F}{_1} \left(-\nu, \nu+1; 1; 
      \frac{1+<x,x_{0}>}{2}\right)\label{hyperG} \\
    &=\C P_{-\frac{1}{2} + i \tau}
    \left(<x,x_{0}>\right) \label{conicalP},
  \end{align} 
\end{subequations}
where 
$\nu:=\frac{-1+i\sqrt{4k^2-1}}{2}$, $k>1/2$, and 
\begin{align}
  \label{constant-definition}    
  \C=-\frac{1}{4\sin(\nu\pi)} 
    =\frac{1}{4\cosh(\frac{\pi}{2}\sqrt{4k^2-1})}.
\end{align}
\end{definition} 
(See Figure~\ref{f:greensFun} for a visualization of $G_1(x,x_0)$.) We
mention here that the specific choice of $\C$ is motivated by the
calculations performed while deriving a representation formula in
Section~\ref{s:representation}.  We will see that $\C$ is well-defined
with our assumption that $k>1/2$.

\subsection{Properties of the fundamental solution of the
Yukawa-Beltrami operator}
Before we embark on the definition and analysis of boundary integral
operators, we examine some properties of the fundamental solution $G_k$
defined in Definition~\ref{fundamentaldef}. We shall use either the
representation~\eqref{LegP},~\eqref{hyperG}, or~\eqref{conicalP} as
convenient.  We first observe that the fundamental solution is symmetric
in its arguments, $G_k(x,x_{0}) = G_k(x_{0},x)$. Next, by noting the
expansion for the conical functions as~\cite{lebedev}
\begin{align*}
  P_{-\frac{1}{2}+i \tau} (\cos(t)) &= 1+
    \frac{4\tau^2+1}{2^2}\sin^2(t/2) + 
      \frac{(4\tau^2+1)(4\tau^2+3^2)}{2^24^2}\sin^4(t/2) + 
    \cdots,
\end{align*}
for $0\leq t\leq \pi$, we see that $G_k(x,x_0)$ does not change sign for
  all $x,x_{0} \in \S, x \neq x_{0}$, provided $k$ is fixed.

Next, we examine the asymptotic behaviour as $x\rightarrow x_0$. In this
case, it is convenient to work with~\eqref{LegP}. Setting $\mu=0$ in
14.8.1 of~\cite{fatAbramowitz} (or using 7.5.5 in~\cite{lebedev}), we
have the asymptotic behaviour
\begin{align} 
  \label{Pasymptotics}
  \lim_{t\rightarrow 1^- } P_\nu(t)&= \frac{1}{\Gamma(1)}=1.
\end{align}
Next, following~\cite{fatAbramowitz}, as $t\rightarrow 1^{-}$,
\begin{align*}
  Q_{\nu}(t)=\frac{1}{2}\log\left(\frac{2}{1-t}\right)-\gamma
  -\psi(\nu+1)+\bigO(1-t),\: \nu \neq -1,-2,...
\end{align*} 
where $\psi(x)$ is the digamma function, $\gamma=0.5772\ldots$ is
Euler's constant, and $Q_{\nu}$ is the other linearly independent
solution of Legendre's equation~\eqref{LegendrePequation}.  For
subsequent use, we denote 
\begin{align*}
  R(\nu):= \frac{1}{2}\log(2) -\gamma -\psi(\nu+1), 
\end{align*}
and therefore for $t$ close to $1$,
\begin{equation}
  \label{Qasymptotics}
  Q_{\nu}(t)=-\frac{1}{2}\log(1-t) + 
  R(\nu) + \bigO(1-t),\: \nu \neq -1,-2,... 
\end{equation}

Now, as $x\rightarrow x_{0}$ on the surface of the sphere,
using~\eqref{LegP} we have 
\begin{align*}
  \lim_{x\rightarrow x_{0}}G_k(x,x_{0}) =
  \C\lim_{x\rightarrow x_{0}} P_{\nu(k)}\left(-<x,x_{0}>\right) =
  \C\lim_{t\rightarrow 1^-}P_{\nu(k)}\left(-t\right).
\end{align*}
We cannot immediately use~\eqref{Pasymptotics}. Instead, we must use the
connection formula 14.9.10 in~\cite{fatAbramowitz} (again setting
$\mu=0$):
\begin{align}
  \frac{2}{\pi}\sin(\nu \pi)Q_\nu(t)= \cos(\nu\pi)P_\nu(t) - P_\nu(-t).
  \label{connect}
\end{align}
Combining~\eqref{connect} with~\eqref{Pasymptotics}
and~\eqref{Qasymptotics}, we obtain
\begin{align}
  \lim_{x\rightarrow x_{0}}G_k(x,x_{0}) &= \lim_{x\rightarrow x_{0}}
      \C P_{\nu}(-<x,x_{0}>) \nonumber \\
  &= \lim_{x\rightarrow x_{0} }\C\left(\cos(\nu\pi)
  P_{\nu}(<x,x_{0}>) - 
      \frac{2}{\pi}\sin(\nu \pi)Q_{\nu}(<x,x_0>)\right) \nonumber \\
  &= \C\cos(\nu\pi) \nonumber \\
  & \quad - \frac{2\C}{\pi}\sin(\nu\pi)\lim_{x\rightarrow x_0} 
      \left(-\frac{1}{2}\log(1-<x,x_0>) + R(\nu) +
      \bigO(1-<x,x_0>)\right) \nonumber \\
  &=\frac{\C\sin(\nu\pi)}{\pi}\lim_{x\rightarrow x_0}
      \log(1-<x,x_0>) + \C\left[\cos(\nu\pi)-
      2\frac{\sin(\nu\pi)}{\pi}R(\nu)\right].
  \label{e:logSingularity}
\end{align}
Therefore, as $x\rightarrow x_{0}$ $G_k(x,x_{0})$ possesses a
logarithmic singularity (recall that $\nu$ is not an integer).  At this
point, we could predict a value for $\C$ so that the strength the
singularity is consistent with that for the
Laplace-Beltrami~\cite{gemmrich}.  However, we compute $\C$ rigorously
below.

It is also useful to document the relation, obtained using recurrence
relations for the LegendreP function (Section 14.10.4
in~\cite{fatAbramowitz}):
\begin{align*} 
  (1-z^2) P'_\nu(z) = (-\nu-1) P_{\nu+1}(z) + (\nu+1)z P_\nu(z).
\end{align*}

Suppose, again without loss of generality, that $x_0$ is the North Pole.
Let $x=x(\theta,\phi)$. Then, $G_k(x,x_0) = \C P_\nu(-\cos(\theta))$,
and
\begin{align}
  \label{difflegp} 
  \frac{d}{d\theta} P_\nu(-\cos(\theta)) =  
    (\nu+1)(\sin\theta) \left( \frac{ P_{\nu+1}(-z) +z
        P_{\nu}(-z)}{z^2-1} 
   \right)\vert_{z=\cos(\theta)}.
\end{align}
Now, using~\eqref{difflegp}, 
\begin{align*}
  \underline{\mbox{curl}}_{\S} G_k(x,x_0)&=-
  \C\frac{\partial}{\partial \theta} \left[P_{\nu(k)}(-\cos(\theta))
  \right] \vec{e}_\varphi \\
  &=-\C(\nu+1)\frac{\sin(\theta)}{\cos^2\theta -1} 
    \left[ P_{\nu+1}(-z) + 
   z 
    P_\nu(-z)\right]\vert_{z=\cos(\theta)} \vec{e}_\varphi. \\
\end{align*}
Since the line element on the surface of the sphere is given by
\begin{align*}
  \vec{t}(x(\varphi,\theta))\cdot ds_{x(\varphi,\theta)} = 
    d\theta\,\vec{e}_{\theta} \,+
    \, \sin\theta d\varphi \,\vec{e}_{\varphi}, 
\end{align*} 
we obtain that
\begin{align}
  \label{curlT}
  \underline{\mbox{curl}}_{\S} G_k(x,x_0)\cdot \vec{t} &= 
  \C (\nu+1)  \left[ P_{\nu+1}(-z) + 
  z P_\nu(-z)\right]\vert_{z=\cos(\theta)} d\phi =: F(\theta)d\phi.
\end{align}
The term $F(\theta)= \C (\nu+1)  \left[ P_{\nu+1}(-z) + z
P_\nu(-z)\right]\vert_{z=\cos(\theta)}$ will be important in subsequent
sections, and is related to the kernel of the second layer operator.
(We shall define this operator carefully later.) For now, however, we
are interested in the limit of $F(\theta)$ as the point $x\rightarrow
x_0$, that is, as $\theta \rightarrow 0^+$.  

\begin{lemma} 
Let $F(\theta)$ be as defined in~\eqref{curlT}. Then
$\lim_{\theta\rightarrow 0^+} F(\theta)= \frac{2}{\pi}C_k \sin(\nu
\pi)$.
\end{lemma}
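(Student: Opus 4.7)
The plan is to pass to the scalar variable $z=\cos\theta$, study the limit of the bracket inside $F(\theta)$ as $z\to 1^-$, and handle the logarithmic singularities of the Legendre functions at the north pole via the connection formula~\eqref{connect} together with the asymptotics~\eqref{Pasymptotics} and~\eqref{Qasymptotics} already established in the excerpt.

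First, I would apply~\eqref{connect} to both $P_\nu(-z)$ and $P_{\nu+1}(-z)$. Using the elementary identities $\sin((\nu+1)\pi)=-\sin(\nu\pi)$ and $\cos((\nu+1)\pi)=-\cos(\nu\pi)$, the bracket in~\eqref{curlT} rearranges as
\begin{align*}
P_{\nu+1}(-z)+zP_\nu(-z)
 = \cos(\nu\pi)\bigl[\,zP_\nu(z)-P_{\nu+1}(z)\bigr]
 + \tfrac{2}{\pi}\sin(\nu\pi)\bigl[\,Q_{\nu+1}(z)-zQ_\nu(z)\bigr].
\end{align*}
This manipulation is valuable because the left side has singular pieces coming from $P_\nu(-z)$ at $z=1$, while the right side separates the bounded (LegendreP) and divergent (LegendreQ) contributions cleanly.

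Next, I would take $z\to 1^-$ in each bracket. By~\eqref{Pasymptotics}, $P_\nu(1)=P_{\nu+1}(1)=1$, so the first bracket vanishes in the limit and its coefficient $\cos(\nu\pi)$ does not matter. For the second bracket, the individual logarithmic singularities of $Q_{\nu+1}(z)$ and $zQ_\nu(z)$ cancel at leading order: using~\eqref{Qasymptotics},
\begin{align*}
Q_{\nu+1}(z)-zQ_\nu(z) \;=\; \tfrac{z-1}{2}\log(1-z) + R(\nu+1)-zR(\nu)+\bigO(1-z),
\end{align*}
and the $(z-1)\log(1-z)$ term tends to zero. The limit of the second bracket is therefore $R(\nu+1)-R(\nu) = \psi(\nu+1)-\psi(\nu+2)$, which by the digamma recurrence $\psi(\nu+2)=\psi(\nu+1)+1/(\nu+1)$ collapses to $-1/(\nu+1)$.

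Putting these two limits together gives $\lim_{z\to 1^-}[P_{\nu+1}(-z)+zP_\nu(-z)] = -\tfrac{2}{\pi(\nu+1)}\sin(\nu\pi)$, and multiplying by the overall prefactor $\C(\nu+1)$ from the definition of $F$ cancels the $(\nu+1)$ denominator to produce the claimed value $\tfrac{2}{\pi}\C\sin(\nu\pi)$ (up to a sign that must be reconciled with the orientation convention for $\vec{t}$ used in~\eqref{curlT}). The main technical obstacle is precisely this cancellation of the logarithmic divergences: one must verify that the two $-\tfrac12\log(1-z)$ terms combine with the factor $z$ to produce a harmless $(z-1)\log(1-z)$ rather than a surviving logarithm, since the whole point of the computation is that $F(\theta)$ admits a finite boundary value even though each summand in it does not.
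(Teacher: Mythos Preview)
Your argument is essentially the paper's own: apply the connection formula~\eqref{connect} to rewrite $P_{\nu+1}(-z)+zP_\nu(-z)$ as a $P$-part plus a $Q$-part, kill the $P$-part via~\eqref{Pasymptotics}, and reduce the $Q$-part to $\psi(\nu+2)-\psi(\nu+1)=1/(\nu+1)$ after the logarithms cancel. Your explicit display of the $(z-1)\log(1-z)$ cancellation is a bit more careful than the paper's, but the route is identical.

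One clarification on your closing hedge: the sign you compute, $-\tfrac{2}{\pi}\C\sin(\nu\pi)$, is not an orientation artifact---$F(\theta)$ in~\eqref{curlT} is defined purely as $\C(\nu+1)[P_{\nu+1}(-z)+zP_\nu(-z)]$ with $z=\cos\theta$, so no tangent convention enters. In fact the paper's own intermediate results~\eqref{smallermess1}--\eqref{smallermess2} also yield $-\tfrac{2}{\pi(\nu+1)}\sin(\nu\pi)$ for the bracket, hence $-\tfrac{2}{\pi}\C\sin(\nu\pi)$ for the limit; the sign in the lemma statement and in~\eqref{doublelayerwithz} is a slip. Your value is the one consistent with the subsequent claim $\lim_{\theta\to0^+}F(\theta)=\tfrac{1}{2\pi}$ once $\C=-\tfrac{1}{4\sin(\nu\pi)}$ is substituted.
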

\begin{proof} 
Using~\eqref{connect}, the term in the square bracket of
\begin{align*}
F(\theta) =  \C (\nu+1)  \left[ P_{\nu+1}(-z) + 
  z P_\nu(-z)\right]\vert_{z=\cos(\theta)},
\end{align*}
can be simplified.  Denoting $\cos(\theta)=z$, 
\begin{align}
  P_{\nu+1}(-z) + z P_\nu(-z)&=-\frac{2}{\pi}\sin\left((\nu+1) \pi\right)Q_{\nu+1}(z)+
    \cos((\nu+1)\pi)P_{\nu+1}(z) \nonumber \\
  & \qquad +z\left[-\frac{2}{\pi}\sin(\nu\pi)Q_\nu\left(z\right)+
    \cos(\nu\pi)P_\nu\left(z\right)  \right] \nonumber \\
  &=-\frac{2}{\pi}\left[z\sin(\nu \pi)Q_\nu\left(z\right) + 
    \sin\left((\nu+1) \pi\right)Q_{\nu+1}\left(z\right)\right]
    \nonumber \\
  & \qquad +\left[ \cos((\nu+1)\pi)P_{\nu+1}\left(z\right) + 
    z\cos(\nu\pi)P_\nu\left(z\right) \right] \nonumber \\
  &=-\frac{2}{\pi}\sin(\nu \pi)\left[ 
    zQ_\nu\left(z\right) - Q_{\nu+1}\left(z\right)\right] 
  + \cos(\nu\pi)\left[
    -P_{\nu+1}\left(z\right)+zP_\nu\left(z\right)\right].
  \label{bigmess1}
\end{align}
 
As $x\rightarrow x_{0}$, i.e., as $\theta \rightarrow 0^{+}$, clearly
$z=\cos(\theta)\rightarrow 1^{-}$. Recall that in this limit, $P_\nu(z)
\rightarrow 1.$ Therefore, the terms involving the LegendreP functions
of the first kind in~\eqref{bigmess1} behave as 
\begin{align} 
  \label{smallermess1}
  \lim_{z \rightarrow 1^- }\cos(\nu\pi)\left[ -P_{\nu+1}(z)+ z P_\nu(z)
  \right]  = 0.
\end{align}
We can use~\eqref{Pasymptotics} to compute the limit of the term
involving the Legendre functions of the second kind in~\eqref{bigmess1}:
\begin{align}
  \lim_{z \rightarrow 1^{-}} 
    -\frac{2}{\pi}\sin(\nu\pi)\left[zQ_\nu\left(z\right) - 
    Q_{\nu+1}\left(z\right)\right] 
  &= -\frac{2}{\pi}\sin(\nu\pi)\left[
    \psi(\nu+2)-\psi(\nu+1) \right] \nonumber \\
  &= -\frac{2}{\pi (\nu+1)}\sin(\nu \pi).
  \label{smallermess2}
\end{align}
Putting together~\eqref{smallermess1},~\eqref{smallermess2},
and~\eqref{curlT}, we have
\begin{align} 
  \label{doublelayerwithz}
\lim_{\theta \rightarrow 0^+}F(\theta) 
  = \lim_{z\rightarrow 1-}\C(\nu+1)\left[ P_{\nu+1}(-z)+zP{\nu}(-z)\right]
  = \C \frac{2}{\pi} \sin(\nu \pi).
\end{align}
\end{proof}
\qed

Observe that setting $\C=-\frac{1}{4\sin(\nu \pi)}$, that is, using
exactly the constant defined in~\eqref{constant-definition} in the
definition of $G_k(x,x_0)$, we have $\lim_{\theta\rightarrow
0^+}F(\theta)=\frac{1}{2\pi}.$

\subsection{Representation formula}
\label{s:representation}
Let $\Omega$ and $\Omega^{c}:=\S\setminus{\overline{\Omega}}$ be a
simply-connected submanifold and its complement on the sphere. We can
then derive a representation formula in terms of the fundamental
solution
$G_k(x,x_{0})$ of the previous section.
\begin{proposition}
\label{prop:repr}
Every sufficiently smooth function $u \in C^{2}(\Omega) \cap
C^{1}(\overline{\Omega})$ satisfies the representation formula
\begin{align} 
\label{eq:representationformula}
 - \int\limits_{\Omega} 
    G_k(x,x_{0})\left[\lap_{\S}u(x) -k^2 u(x)\right]\,d\sigma_x 
 &- \int\limits_{{\Gamma}} G_k(x,x_{0})  
    \underline{\mbox{curl}}_{\S} u(x) \cdot \vec{t}(x) ds_x \nonumber \\
 + \int\limits_{{\Gamma}}  u(x)
 \,\underline{\mbox{curl}}_{{\S}} G_k(x,x_0) \cdot \vec{t}(x) ds_x 
 &= \left\{ \begin{array}{ll} u(x_0) & \mbox{ if $x_0\in \Omega$},\\
  0 & \mbox{ if  $x_0\in{\Omega}_c$}. 
  \end{array} \right.
\end{align}

\end{proposition}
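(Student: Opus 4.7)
The plan is to apply Green's second identity~\eqref{Green2} with $v(x) = G_k(x,x_0)$. By construction, $G_k(\cdot,x_0)$ solves the homogeneous Yukawa-Beltrami equation $(-\lap_\S + k^2)G_k(\cdot,x_0) = 0$ pointwise away from $x_0$, so the volume term involving $v$ on the left-hand side of~\eqref{Green2} will disappear wherever we apply the identity. The essential dichotomy in~\eqref{eq:representationformula} comes from whether or not $x_0$ lies in the region over which we integrate.

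First, I would handle the easy case $x_0 \in \Omega^c$. There $G_k(\cdot,x_0)$ is smooth on all of $\overline{\Omega}$, so Green's second formula applies directly on $\Omega$ and, since $(-\lap_\S + k^2)G_k(\cdot,x_0) \equiv 0$ on $\Omega$, the identity reduces exactly to~\eqref{eq:representationformula} with right-hand side $0$.

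For the interior case $x_0 \in \Omega$, I would excise a small geodesic disc $B_\varepsilon(x_0) \subset \Omega$, work on $\Omega_\varepsilon := \Omega \setminus \overline{B_\varepsilon(x_0)}$ (on which $G_k(\cdot,x_0)$ is smooth), and apply~\eqref{Green2} there. The boundary of $\Omega_\varepsilon$ is $\Gamma \cup C_\varepsilon$, where $C_\varepsilon = \partial B_\varepsilon(x_0)$ is traversed with the opposite orientation to the natural one (inducing a sign flip). The volume term on the left of~\eqref{Green2} vanishes again because $G_k(\cdot,x_0)$ solves the homogeneous equation on $\Omega_\varepsilon$. Taking $\varepsilon \to 0$, the contributions from $\Gamma$ produce the two boundary terms displayed in~\eqref{eq:representationformula} (via the dominated convergence theorem, since the integrands are continuous on $\Gamma$), and the contribution from $C_\varepsilon$ must yield exactly $u(x_0)$.

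The main obstacle is analyzing the two $C_\varepsilon$-integrals in the limit. WLOG rotate so that $x_0$ is the North Pole and parametrize $C_\varepsilon$ by $\theta = \varepsilon$, $\phi \in [0,2\pi)$, with arclength element $\sin(\varepsilon)d\phi$. For the integral involving $G_k(x,x_0)\,\underline{\mbox{curl}}_\S u(x)\cdot\vec{t}(x)$, the logarithmic singularity established in~\eqref{e:logSingularity} gives an integrand of size $\bigO(\log\varepsilon)$ against an arclength of size $\bigO(\varepsilon)$, so the contribution is $\bigO(\varepsilon\log\varepsilon) \to 0$. For the integral involving $u(x)\,\underline{\mbox{curl}}_\S G_k(x,x_0)\cdot\vec{t}(x)$, the identity~\eqref{curlT} rewrites it as $\int_0^{2\pi} u(x(\varepsilon,\phi))F(\varepsilon)\,d\phi$. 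Using the continuity of $u$ at $x_0$ together with the Lemma, which gives $F(\varepsilon) \to \tfrac{2}{\pi}\C\sin(\nu\pi)$, and the specific choice $\C = -\tfrac{1}{4\sin(\nu\pi)}$ so that $F(\varepsilon)\to \tfrac{1}{2\pi}$, this integral tends to $u(x_0)$. Accounting for the orientation sign on $C_\varepsilon$ (opposite to the outward-pointing tangent of $\Omega_\varepsilon$) places $u(x_0)$ on the right-hand side with the correct sign, completing the proof.
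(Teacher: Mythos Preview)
Your approach is essentially identical to the paper's: split into the two cases, apply Green's second formula directly when $x_0\in\Omega^c$, and for $x_0\in\Omega$ excise $B_\varepsilon(x_0)$, apply~\eqref{Green2} on $\Omega_\varepsilon$, and analyze the $C_\varepsilon$-contributions exactly as you describe (using the logarithmic bound~\eqref{e:logSingularity} for one and~\eqref{curlT} with the Lemma for the other).

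Two small points to tighten. First, when you say ``the volume term on the left of~\eqref{Green2} vanishes again,'' be careful: only the piece $\int_{\Omega_\varepsilon} u\,(-\lap_\S+k^2)G_k\,d\sigma$ vanishes; the piece $\int_{\Omega_\varepsilon} G_k\,(-\lap_\S+k^2)u\,d\sigma$ is precisely the volume term appearing in~\eqref{eq:representationformula}, and you still need to justify passing it to the limit $\varepsilon\to 0$ (the paper does this explicitly, using that the logarithmic singularity of $G_k$ is integrable over $B_\varepsilon(x_0)$). Second, the $\Gamma$-integrals do not depend on $\varepsilon$ at all, so no dominated convergence argument is needed there.
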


\begin{proof} 
We let $x\in \Omega$, and $x_0$ be a point either in $\Omega$ or
$\Omega^c$.  We proceed by examining the two possible cases.

{\bf Case 1}:  $x_{0} \in \Omega^{c}$.  In this situation, the
fundamental solution $G_{k}(x,x_{0})$ satisfies the Yukawa-Beltrami
equation point-wise at all $x\in \Omega$. Since $x \neq x_0$,
$G_{k}(x,x_0)$ is a smooth function and we can therefore use Green's
second identity~\eqref{Green2} with $v(x)$ replaced by
$G_{k}(x,x_{0})$. Since $x_0\not \in \Omega$ all the
integrals involved are bounded, and 
\begin{align*}
  \int\limits_{\Omega} u(x)(\lap_{\S}G_{k}(x,x_0) 
      -k^{2}G_{k}(x,x_0)) \, d\sigma_x=0.
\end{align*}
Therefore, using~\eqref{Green2} we have 
\begin{align*}
 0 &= - \int\limits_{\Omega}
    G_k(x,x_{0})\left(\lap_{\S}u(x) -k^2 u(x)\right)\,d\sigma_x \\
    &-\int\limits_{{\Gamma}} G_k(x,x_{0}) \, \underline{\mbox{curl}}_{\S}
    u(x) \cdot \vec{t}(x) ds_{x} 
 + \int\limits_{{\Gamma}}  u(x)
 \,\underline{\mbox{curl}}_{{\S}} G_k(x,x_0) \cdot \vec{t}(x) ds_{x}.
\end{align*}
\begin{figure}
  \centering
  \includegraphics[width=0.6\textwidth]{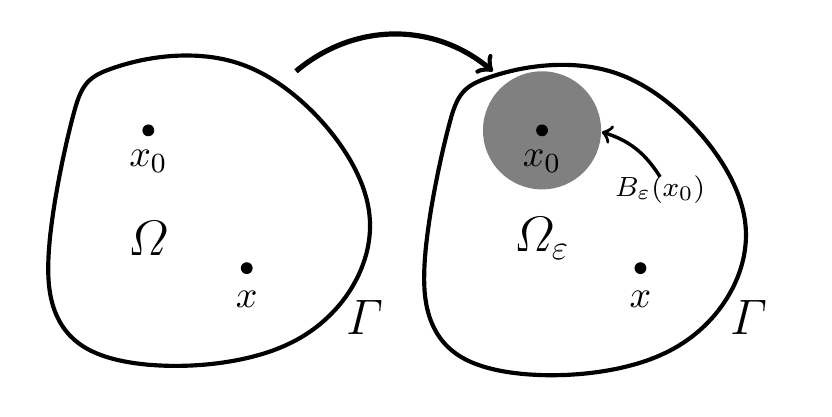}
  \caption{\label{f:proof1} The points $x,x_0$ belong to $\Omega$. We
  denote by $\Omega_\varepsilon:=\Omega\setminus B_{\varepsilon}(x_0)$}
\end{figure}

{\bf Case 2}:  $x_0\in \Omega$.  In this situation  we cannot directly
use Green's second identity, since the fundamental solution has a
logarithmic singularity when $x=x_0$. Instead, we first define the
$\varepsilon$-neighbourhood of $x_0$,
$B_{\varepsilon}(x_0):=\{y\in{\S}:\, |y-x_0| \,<\, \varepsilon\}$. We
choose $\varepsilon>0$ small enough such that
$B_\varepsilon(x_{0})\subset \Omega$ (see Figure~\ref{f:proof1}). We
then set ${\Omega}_{\varepsilon}:=\Omega\setminus B_{\varepsilon}(x_0)$.
For $x\in \Omega_\varepsilon$, once again $G_k(x,x_0)$ satisfies the
Yukawa-Beltrami equation exactly.   The second Green's formula for
$\Omega_{\varepsilon}$ with $v(x)=G_k(x,x_0)$ yields:
\begin{align}
  - \int\limits_{\Omega_{\varepsilon}} G_k(x,x_0)\,
    (\lap_{\S}& u(x) -k^2u(x))\,d\sigma_x = 
    \int\limits_\Gamma [ G_k(x,x_0)\,\underline{\mbox{curl}}_{\S} 
    u(x) - u(x)\,\underline{\mbox{curl}}_{\S} G_k(x,x_0)] \cdot 
    \vec{t}(x) ds_x \nonumber \\
  &+\int\limits_{\partial B_{\varepsilon}(x_0)} [
    G_k(x,x_0)\,\underline{\mbox{curl}}_{\S} u(x) -
    u(x)\,\underline{\mbox{curl}}_{\S} G_k(x,x_0)]\cdot 
    \vec{t}(x) ds_{x}.
  \label{Green_S1eps}
\end{align}
We shall discuss each of these terms. We first note that since $x_{0}\in
\Omega$, the integrals over $\Gamma$ in~\eqref{Green_S1eps} are
well-defined. So we must examine the volume integral over
$\Omega_\varepsilon$ and the integrals over $\partial B_\varepsilon$.
For ease of exposition and without loss of generality, we take
$x_{0}=(0,0,1)$.  The curve $\partial B_{\varepsilon}$ is then fully
described by the latitude $\theta_{\varepsilon}$.  In this case, if
$x\in \partial B_\varepsilon$, then $<x,x_{0}> =
\cos(\theta_\varepsilon)$, and  $G_k(x,x_0) = \C
P_{\nu(k)}\left(-<x,x_{0}>\right) = \C
P_{\nu(k)}\left(-\cos(\theta_\varepsilon)\right).$ Therefore, since
$\cos(\theta_\varepsilon) = 1- \theta^2_\varepsilon/2 + \cdots$, we can
use the asymptotic result~\eqref{e:logSingularity} to compute
\begin{align*}
 \int\limits_{\partial B_{\varepsilon}}\vert G_k(x,x_0)\,\vert  ds_x 
  &\leq \int_{0}^{2\pi}\vert \C P_{\nu(k)}(-\cos\theta_{\varepsilon})  
    \, \sin\theta_{\varepsilon}| \, d\varphi \\
  &= \C|\sin\theta_{\varepsilon}|
    |P_{\nu(k)}(-\cos\theta_{\varepsilon}) | 2\pi \\
  &= 2 \pi |\sin\theta_{\varepsilon}|\C \left(\cos(\nu\pi) 
    P_{\nu(k)}(\cos \theta_\varepsilon) - 
    \frac{2}{\pi}\sin(\nu \pi)
    Q_\nu(\cos \theta_{\varepsilon})\right) \\
  &\rightarrow 0 \quad \mbox{ (as $\varepsilon\rightarrow 0$)},
\end{align*}
where the limit holds since the singularity of $P_{\nu(k)}$ is only
logarithmic.  We can therefore estimate the first integral over
$\partial B_\varepsilon$ in~\eqref{Green_S1eps} as
\begin{align*}
  \left|\int\limits_{\partial B_{\varepsilon}} 
  G_{k}(x,x_{0})\,\underline{\mbox{curl}}_{\S} 
  u(x)\cdot\vec{t}(x)\, ds_x\right|\, \leq\, 
  \|\underline{\mbox{curl}}_{\S}u\|_{L^{\infty}} \,
  \int\limits_{\partial B_{\varepsilon}} |G_k(x,x_0)|\,ds_x 
  \rightarrow 0, \, \mbox{as}\, \varepsilon \rightarrow 0.
\end{align*}

To analyse the second contribution along $\partial B_{\varepsilon}$
in~\eqref{Green_S1eps}, we again assume $x_{0}=(0,0,1)$.
Using~\eqref{curlT} we deduce that (note the orientation of $\partial
B_{\varepsilon}$)
\begin{align}
  \label{secondterm}
  -\int\limits_{\partial B_{\varepsilon}}
    [u(x)&\underline{\mbox{curl}}_{\S}G(x,x_0)\cdot \vec{t}(x)] ds_x 
    \nonumber \\
  &=\frac{\C(\nu+1) \sin^{2}\theta_{\varepsilon}}
         {\cos^{2}\theta_{\varepsilon} -1} 
  \left[ P_{\nu+1}(-\cos\theta_{\varepsilon}) +   
  (\cos \theta_\varepsilon) P_{\nu}(-\cos \theta_\varepsilon)\right] 
  \int_{2\pi}^0 u(\varphi,\theta_{\varepsilon})\, d\varphi  \nonumber\\
  &= \C(\nu+1) \left[ P_{\nu+1}(-\cos\theta_\varepsilon)
  + (\cos \theta_\varepsilon) P_\nu(-\cos\theta_\varepsilon)\right] 
  \int_{0}^{2\pi} u(\varphi,\theta_{\varepsilon})\, d\varphi.
\end{align}

Combining~\eqref{smallermess1},~\eqref{smallermess2}
and~\eqref{secondterm} and using the continuity of $u$, we have
\begin{align*}
  -\lim_{\varepsilon\rightarrow 0}\int\limits_{\partial B_{\varepsilon}}
  u(x)& [\underline{\mbox{curl}}_{\S}G(x,x_0)\cdot \vec{t}(x)] ds_{x}
  = -4\C\sin(\nu \pi)  u(x_0) = u(x_0).
\end{align*} 
Note that $\C=-\frac{1}{4\sin(\nu\pi)}$ is chosen precisely to be this
value in~\eqref{constant-definition} to ensure this integral, in the
limit, is $u(x_0)$.

We finally observe that since $u$ is assumed to be smooth enough,
\begin{align*} 
  \left| \int\limits_{\Omega} G_k(x,x_0)\,(\lap_{\S} u(x)-k^2u(x)) 
  \, d\sigma_x\right|\,&\leq \,
  \|\lap_{\S}\,u -k^2 u\|_{L^{\infty}(\Omega)} 
  \int\limits_{\Omega} |G_k(x,x_0)| \, d\sigma_x\,\\
  & \leq \|\lap_{\S}\,u -k^2 u\|_{L^{\infty}(\Omega)}M,
\end{align*}
where $M$ is a constant which depends on the volume of $\Omega$.
Specifically, since $P_\nu(k)(-t)$ is smooth away from the
(logarithmic) singularity at $t=1$, we have 
\begin{align*}
  M := \int\limits_{\Omega} |G_k(x,x_0)| \, d\sigma_x
  = \int_{B_{\varepsilon}(x_0)}|G_k(x,x_0)| \, d\sigma_x + M_1.
\end{align*}
The first integral is bounded since the logarithm is integrable over
${B_{\varepsilon}(x_0)}$, and $M_1$ is some finite constant depending
on the area of $\Omega$. Hence $M$ is a finite positive
constant depending on $\Omega$ and
\begin{align*}
  \lim_{\varepsilon\rightarrow 0}
  \int\limits_{\Omega_{\varepsilon}} G_k(x,x_0)\,(\lap_{\S} u(x)
  -k^2u(x)) \,d\sigma_x = \int\limits_{\Omega} G_k(x,x_0)\,
  (\lap_{\S} u(x) -k^2u(x))\,d\sigma_x.
\end{align*}

Therefore, taking limits in~\eqref{Green_S1eps} proves the result.
\end{proof}
\qed

\section{Layer potentials and boundary integral operators}
Now that we have a convenient parametrix $G_k(x,x_0)$ defined in
Definition~\ref{fundamentaldef} for the Yukawa-Beltrami equation and a
representation formula~\eqref{eq:representationformula}, we can define
convenient layer potentials which in turn will be used to reduce the
boundary value problem~\eqref{DBVP}  over the domain $\Omega$ to a
boundary integral equation over $\Gamma = \partial \Omega$.

\subsection{Single- and double-layer potentials}
We define the following two layer potentials.
\begin{itemize}
\item The {\bf single-layer potential} with sufficiently smooth density
function $\sigma$:
  \begin{align}
    (\widetilde{V_k}\sigma)(x):=\int\limits_{\Gamma}
    G_k(x,y)\,\sigma(y)\,ds_y,\quad\mbox{for $x \notin \Gamma$};
    \label{singlelayerpotential}
  \end{align}
\item and the {\bf double-layer potential} with sufficiently smooth
density function $\mu$:
  \begin{align}
    (\widetilde{W_k}\mu )(x):=\int\limits_{\Gamma}
    \mu(y)\,[\underline{\mbox{curl}}_{\S} G_k(x,y)\cdot \vec{t}(y)]\,
    ds_y, \quad\mbox{for $x\notin\Gamma$},
    \label{doublelayerpotential}
  \end{align}
\end{itemize}
We wish to point out here that the form of the double-layer potential above is equivalent to the, perhaps, more familiar form:
  \begin{align}
    (\widetilde{W_k}\mu )(x):=-\int\limits_{\Gamma}
    \mu(y) \frac{\partial \,}{\partial n_y} G_k(x,y) \,
    ds_y, \quad\mbox{for $x\notin\Gamma$}.
  \end{align}
where $n_{y}$ is the outward pointing normal to $\Gamma$ at the point
$y$ lying in a tangent plane to $\S$.  See~\cite{kro:nig2013} for a
more detailed discussion on this point.

By Proposition~\ref{prop:repr}, every solution to the homogeneous
Yukawa-Beltrami equation can be written as the sum of a single- and a
double-layer potential.  This  is the starting point for the so-called
direct boundary integral approach. However, for the purpose of this
paper we follow the layer ansatz based on the following observation.

For $x\notin\Gamma, x\in \Omega$, the single-layer potential
in~\eqref{singlelayerpotential} satisfies:
\begin{align*}
  (-\lap_{\S} + k^{2}) (\widetilde{V_k}\sigma)(x)& =
  (-\lap_{\S} + k^2) \int\limits_\Gamma G_k(x,y)\, \sigma(y)\,ds_{y} \\
  & = \int\limits_\Gamma (-\lap_{\S}+k^2) G_k(x,y)\, 
    \sigma(y)\,ds_{y}= \, 0.
\end{align*}

Hence, we may find the general solution of the Dirichlet boundary value
problem~\eqref{DBVP} in terms of a single-layer potential
\begin{align*}
  u(x) & =  \int\limits_\Gamma G_k(x,y) \,\sigma(y) \,ds_{y}. 
\end{align*}
We would then need to calculate the unknown density $\sigma$.
 
Similarly, the double-layer potential in~\eqref{doublelayerpotential}
satisfies the Yukawa-Beltrami equation for $x\in \Omega, x\notin\Gamma$
\begin{align*}
  (-\lap_{\S} + k^2)(\widetilde{W}\mu)(x) & = (-\lap_{\S}+k^2)
  \int\limits_\Gamma \mu(y)\, [\underline{\mbox{curl}}_{\S}
  G(x,y)\cdot\vec{t}(y)]\,ds_{y} \\
  &=\int\limits_\Gamma \mu(y)
  [(\lap_{\S}-k^2)\,\underline{\mbox{curl}}_{\S}\,
  G(x,y)\cdot\vec{t}(y)]\,ds_{y} \\ 
  &=\int\limits_\Gamma \mu(y) [\underline{\mbox{curl}}_{\S} \,
  (\lap_{\S} -k^2)G(x,y)\cdot\vec{t}(y)]\,ds_{y} = 0.
\end{align*}
We might thus also try to look for the solution to~\eqref{DBVP} in the
form of a double-layer.

\subsection{Jump relations for the layer potentials} 
In the previous section, we have only defined the layer potentials for
$x$ away from the boundary curve. However, in order to align the
operators with the given Dirichlet data along $\Gamma$, we need to
investigate their behavior in the limit as $x$ approaches $\Gamma$.
Similarly, if one is interested in solving the Neumann problem in which
the tangential component of the vectorial surface rotation is prescribed
along $\Gamma$, one has to investigate the limit features of this
quantity for the layer potentials. In both cases, there will be certain
jump relations across the curve $\Gamma$. For the purpose of this paper
however, we will restrict ourselves to the Dirichlet case.  First,
consider the single-layer potential with density $\sigma$ for $x \notin
\Gamma$:
\begin{align*}
  (\widetilde{V_k}\sigma)(x) & =  \int\limits_\Gamma
  G_k(x,y)\,\sigma(y)\,ds_{y} \\
  & = \C \int\limits_\Gamma P_{\nu(k)}(-<x,y>)\,\sigma(y)\,ds_{y}.
\end{align*}
The following Lemma describes the limit behavior of the single-layer
potential.  \begin{lemma} Let $\widetilde{V_k}$ be the single-layer
potential defined in~\eqref{singlelayerpotential}. For $x_{0}\in\Gamma$
we have:
\begin{align*} 
  (V\sigma)(x_0) & := \lim\limits_{\substack{
      x \to x_{0} \\ x \in \Omega}}
  \,(\widetilde{V_k}\sigma)(x) = \int\limits_\Gamma G_k(x_{0},y)\,\sigma(y)\,ds_{y}
\end{align*}
as a weakly singular line integral and hence $(\widetilde{V_k}\sigma)$
is continuous across $\Gamma$.
\end{lemma}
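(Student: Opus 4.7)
The plan is to establish two facts: first that the integral $\int_{\Gamma} G_k(x_0,y)\sigma(y)\,ds_y$ is a convergent weakly singular integral, and second that $(\widetilde{V_k}\sigma)(x)$ is continuous as $x$ crosses $\Gamma$, with limiting value equal to that integral.

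For the first point, I would invoke the asymptotic expansion~\eqref{e:logSingularity}. As $y \to x_{0}$ on $\S$, we have $G_k(x_{0},y)=\frac{\C\sin(\nu\pi)}{\pi}\log(1-\langle x_{0},y\rangle)+\C[\cos(\nu\pi)-\tfrac{2}{\pi}\sin(\nu\pi)R(\nu)]+\bigO(1-\langle x_{0},y\rangle)$, and since $1-\langle x_{0},y\rangle=\tfrac{1}{2}\|x_{0}-y\|^{2}$ the singular part is $\frac{2\C\sin(\nu\pi)}{\pi}\log\|x_{0}-y\|$ plus smooth terms. Parametrizing $\Gamma$ by arclength $s$ near $x_{0}=\gamma(s_{0})$, the smoothness of $\gamma$ gives $\|x_{0}-y(s)\| = |s-s_{0}|+\bigO(|s-s_{0}|^{3})$, so the integrand is of size $\bigO(\log|s-s_{0}|)$, which is integrable on any compact arc. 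Hence $\int_{\Gamma} G_k(x_{0},y)\sigma(y)\,ds_{y}$ is a well-defined weakly singular integral whenever $\sigma$ is bounded.

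For the second point, the idea is to split the kernel into a logarithmic part with a smooth coefficient, plus a smooth remainder,
\begin{equation*}
G_k(x,y) = A_k(x,y)\,\log\|x-y\| + B_k(x,y),
\end{equation*}
where $A_k$ and $B_k$ extend to $C^\infty$ functions of $(x,y)$ on $\S\times\S$. Such a decomposition follows from representing $P_{\nu}(-\langle x,y\rangle)$ via the connection formula~\eqref{connect} and the asymptotic behaviour~\eqref{Qasymptotics}: the $P_\nu(\langle x,y\rangle)$ piece is real-analytic in $\langle x,y\rangle\in[-1,1]$, and the $Q_\nu(\langle x,y\rangle)$ piece isolates the logarithm cleanly via $-\tfrac{1}{2}\log(1-\langle x,y\rangle)$ plus a smooth remainder, which in turn yields $\log\|x-y\|$ plus smooth terms through $1-\langle x,y\rangle=\tfrac{1}{2}\|x-y\|^{2}$. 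With the splitting in hand, $(\widetilde{V_k}\sigma)(x)$ is the sum of a classical logarithmic single-layer potential along a smooth curve with density $A_k(x,\cdot)\sigma(\cdot)$ and a smooth boundary integral with continuous kernel $B_k$. The smooth part depends continuously on $x$ by dominated convergence. For the logarithmic part, the continuity of $\int_\Gamma \log\|x-y\|\,\tilde{\sigma}(y)\,ds_y$ across a smooth curve is a standard result in two-dimensional potential theory (see, e.g., Kress, \emph{Linear Integral Equations}); the parameter $x$ living on $\S$ rather than $\mathbb{R}^2$ does not affect the argument since $\|x-y\|$ still denotes the ambient Euclidean distance and $\Gamma$ is locally a smooth curve in $\mathbb{R}^3$. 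Combining these, $\widetilde{V_k}\sigma$ extends continuously to $\Gamma$ with boundary value given by the weakly singular integral.

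The main obstacle is establishing the smoothness of the coefficients $A_k$ and $B_k$ in the splitting, because the arguments of $P_\nu$ and $Q_\nu$ are evaluated at $\langle x,y\rangle$ and the distinction between the spherical variable $1-\langle x,y\rangle$ and $\|x-y\|^2$ must be handled carefully to reduce to the planar logarithmic potential theory. Once that reduction is made, the continuity statement is classical and the limit from either side of $\Gamma$ yields the same value, proving $\widetilde{V_k}\sigma$ is continuous across $\Gamma$.
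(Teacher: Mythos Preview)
Your argument is correct, but it proceeds along a different route than the paper. The paper gives a direct, self-contained $\varepsilon$-splitting: for fixed $\varepsilon>0$ it writes $\Gamma=C_{\varepsilon,>}\cup C_{\varepsilon,\leq}$ (far and near arcs about $x_0$), notes that $\int_{C_{\varepsilon,>}}[G_k(x,y)-G_k(x_0,y)]\sigma(y)\,ds_y\to 0$ as $x\to x_0$ by continuity of $G_k$ away from the diagonal, and bounds the near contribution $\int_{C_{\varepsilon,\leq}}G_k(x,y)\sigma(y)\,ds_y$ by $\|\sigma\|_{L^\infty}$ times the integral of a logarithm over an arc of length $\bigO(\varepsilon)$, which vanishes as $\varepsilon\to 0$. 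No structural decomposition of the kernel is needed, only the qualitative fact that the singularity is logarithmic.

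Your approach instead factors the kernel as $A_k(x,y)\log\|x-y\|+B_k(x,y)$ with smooth coefficients and then imports the classical continuity of the planar logarithmic single-layer potential. This buys you a cleaner reduction to known theory and would extend more readily to higher regularity statements (e.g.\ mapping properties between Sobolev spaces), at the cost of having to justify the smoothness of $A_k,B_k$ carefully---as you note---and of handling the mild $x$-dependence in the effective density $A_k(x,\cdot)\sigma(\cdot)$ when invoking the planar result. The paper's argument is more elementary and avoids both of these technicalities, but yours is more modular.
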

\begin{proof}
Fix an arbitrary $\varepsilon > 0$. Let $x_0 \in \Gamma$ be fixed, and
$x \in \Omega$ satisfy $|x-x_0| < \varepsilon$.  Introduce the notation 
\begin{align*}
  C_{\e,\leq}:=\{y \in \Gamma,\|y-x_0\|\leq\e\},\qquad  
  C_{\e,>}:=\{y\in \Gamma,\|y-x_0\|>\e\}.
\end{align*}

Then, if we define
\begin{align*}
  I_{\varepsilon}(x) &: = \int\limits_\Gamma G_k(x,y)
  \,\sigma(y) \,ds_{y} - \int_{C_{\e,>}} G_k(x_0,y)
  \,\sigma(y)\,ds_{y},
\end{align*} 
we can easily show
\begin{align}
  I_{\varepsilon}& =  \int_{C_{\e,>}}
  \left[ G_k(x,y)\,-\, G_k(x_0,y) \right]\,\sigma(y)\,ds_{y} +    
    \int_{C_{\e,\leq}} G_k(x,y)\, \sigma(y)\, ds_{y}.
  \label{lemm}
\end{align}
The first integral in~\eqref{lemm} vanishes in the limit as
$x\rightarrow x_0$ since $P_{\nu(k)}(1-<x,x_0>)$ is continuous away from
$<x,x_0>=1$, i.e. where $x=x_0$.  That is, 
\begin{align*}
  \lim\limits_{x \to x_0} \int_{C_{\e,>}}
  \left[G_k(x,y)\,-\,G_k(x_0,y) \right]\,\sigma(y)\,ds_{y} = 0 .
\end{align*}
The second term in~\eqref{lemm} can be bound in terms of the density
$\sigma$:
\begin{align*}
  &\left|\int_{C_{\e,\leq}}
  G_k(x,y)\,\sigma(y)\,ds_{y}\,\right| \, \leq \,
  \| \sigma \|_{L_\infty(\Gamma)} \int_{C_{\e,\leq}}
  \left| G_k(x,y)\right|\,ds_{y}.
\end{align*}
To finish the proof, note that we can estimate
\begin{align*}
  \int_{C_{\e,\leq}} \left| G_k(x,y)\right|\,ds_{y}\,\leq\,
  \int\limits_{\substack{y \in \Gamma \\ |y-x| \leq 2\varepsilon}}
  \left| G_k(x,y)\right|\,ds_{y}
  \buildrel {x\rightarrow x_{0}}\over\longrightarrow 
  \int\limits_{\substack{y \in \Gamma \\ |y-x_{0}| \leq 2\varepsilon}}
  \left| G_k(x_0,y)\right|\,ds_{y}
  \buildrel{\varepsilon\rightarrow 0}\over\longrightarrow 0.
\end{align*}
As before, the final limit holds since $G_{k}(x_{0},y)$ has a
logarithmic singularity at $y = x_{0}$.  Putting these estimates
together, we see that  $\displaystyle \lim_{\varepsilon\rightarrow
0}\lim_{x\rightarrow x_{0}} I_{\varepsilon}(x)=0$, which proves the
assertion.
\end{proof}
\qed

The case of the double-layer potential is slightly more involved. We
anticipate that, just as for the Laplace-Beltrami double-layer
potential~\cite{gemmrich}, the Yukawa-Beltrami double-layer will possess
a jump across the boundary of a domain.  Indeed, since $G_k(x,x_0)$ has
a logarithmic singularity, the estimates follow the same argument as for
the Laplace-Beltrami. The details of the calculation are cumbersome, but
the overall strategy is that of Section 8.2 in~\cite{hackbusch}.

\begin{lemma} 
\label{l:DLPjump} 
Let $\widetilde{W_k}$ be the double-layer potential defined
in~\eqref{doublelayerpotential}, and let $\gamma_0^{\Omega}$ (resp.
$\gamma_0^{\Omega^c})$ denote the trace operator on $\Gamma$, with
traces from inside (respectively outside) $\Omega$.  For $x_{0} \in
\Gamma$ we have:
\begin{align*} 
  (\gamma^{\Omega^{c}}_0\widetilde{W_k}\mu)(x_0)=
  \lim\limits_{\substack{x \to x_0 \\ x \in \Omega^{c}}}\, 
  (\widetilde{W_k}\mu)(x) =
  (K\mu)(x_0)\,+\,\left(1-\frac{\alpha(x_0)}{2\pi}\right)\,\mu(x_0),
\end{align*}
where $\alpha(x_{0})$ represents the interior (with respect to
$\Omega^{c}$) angle of $\Gamma$ at $x_{0}$. For a smooth curve, $\alpha
= \pi$. The {\it double-layer operator} $K$ is defined via the Cauchy
principle value: 
\begin{align*}
  (K\mu)(x_0)&=\lim_{\varepsilon\rightarrow 0}
  \, (K_{\varepsilon}\mu)(x_0) \quad \mbox{where}\qquad
  (K_{\varepsilon}\mu)(x_0)&:=\int\limits_{y\in C_{\epsilon,\geq}} 
  \mu(y)\,\left[\underline{\mbox{curl}}_{\S}
  \,G_k(x_{0},y)\cdot\vec{t}(y)\right]\,ds_{y}.
\end{align*}
Hence the double-layer potential satisfies:
\begin{align*}
  \left[(\gamma_0\widetilde W_k\mu) \right]_\Gamma := 
  (\gamma^{\Omega^{c}}_0\widetilde W_k\mu)+(\gamma^{\Omega}_{0} 
  \widetilde{W_k}\mu)=\mu,
\end{align*}
where we tacitly assumed the orientation of the tangential vector
$\vec{t}$ along $\Gamma$ to be in accordance with the orientation of
$\Omega^{c}$ in the sense of Stoke's theorem.

\end{lemma}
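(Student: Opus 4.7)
The plan is to isolate the logarithmic part of the kernel and reduce the jump analysis to the classical planar Plemelj--Sokhotski computation, exploiting the fact that the constant $\C$ in~\eqref{constant-definition} was fixed precisely so that the parametrix has the normalization
\[
G_k(x,y) \;=\; -\frac{1}{2\pi}\log\|x-y\| \;+\; R_k(x,y),
\]
with $R_k(x,y)$ bounded (and in fact $C^{1,\alpha}$ up to the diagonal) in a neighbourhood of $y=x$; this is simply a repackaging of the computation~\eqref{e:logSingularity}. Applying $\underline{\mbox{curl}}_\S$ in $x$ and pairing with $\vec{t}(y)$ induces a corresponding splitting $\widetilde{W_k}\mu = \widetilde{W_k}^{\log}\mu + \widetilde{W_k}^{R}\mu$, in which the remainder kernel is bounded (so $\widetilde{W_k}^R\mu$ is continuous across $\Gamma$ and only contributes to the regular piece $(K\mu)(x_0)$), while the leading part carries the familiar Cauchy-type $1/\|x-y\|$ singularity along $\Gamma$.

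Following the blueprint of Section 8.2 of~\cite{hackbusch}, I would next write $\mu(y) = \mu(x_0) + (\mu(y)-\mu(x_0))$. Assuming the (at least) H\"older continuity of $\mu$, the density $\mu-\mu(x_0)$ vanishes to positive order at $x_0$ and therefore tames the $1/\|x-y\|$ singularity of the log-kernel's tangential curl. Consequently $\widetilde{W_k}^{\log}(\mu-\mu(x_0))(x)$ is continuous across $\Gamma$ at $x_0$ and its trace realizes the Cauchy PV integral $(K(\mu-\mu(x_0)))(x_0)$. The whole jump is therefore concentrated in the constant-density contribution $\mu(x_0)\,\widetilde{W_k}^{\log}\mathbf{1}(x)$.

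To evaluate that contribution I would pass to geodesic normal coordinates centred at $x_0$, identifying a neighbourhood of $x_0$ on $\S$ with a neighbourhood of the origin in the tangent plane up to errors of order $\|x-y\|^2$. In these coordinates, the kernel $-\frac{1}{2\pi}\,\underline{\mbox{curl}}_\S \log\|x-y\|\cdot\vec{t}(y)\,ds_y$ coincides, modulo curvature corrections that are $o(1)$ in the limit, with the planar exact one-form $-\frac{1}{2\pi}\,d\arg(y-x)$. Integrating this one-form along a small arc of $\Gamma$ near $x_0$ produces a winding-number contribution, and the standard limit argument (approaching from $\Omega^c$) yields the value $1-\alpha(x_0)/(2\pi)$, where $\alpha(x_0)$ is the interior angle of $\Omega^c$ at $x_0$. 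Combining with the PV contribution from the remainder and from the subtracted density gives the stated formula for $(\gamma_0^{\Omega^c}\widetilde{W_k}\mu)(x_0)$. The sum relation then follows by running the same argument from inside $\Omega$: with $\vec{t}$ fixed in accordance with Stokes' theorem on $\Omega^c$, the orientation seen from $\Omega$ is reversed, which flips the sign of the winding contribution and of the PV so that, when the interior and exterior traces are added, the $K\mu$ pieces cancel and the jump contributions combine to $\mu(x_0)$.

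The hard part is not any single analytic step but the careful bookkeeping of signs, tangent/normal orientations, and curvature corrections in the geodesic-normal reduction; once the parametrix has been written with the correct logarithmic coefficient $-\frac{1}{2\pi}$ (precisely the motivation for the choice~\eqref{constant-definition} of $\C$), the argument becomes a direct transcription of the planar Plemelj--Sokhotski proof to the spherical setting, and the computations are, as the authors note, cumbersome but routine.
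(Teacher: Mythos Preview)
Your approach is sound but differs from the paper's in one essential respect: you split the \emph{kernel} into the $-\tfrac{1}{2\pi}\log\|x-y\|$ piece and a regular remainder, then flatten via geodesic normal coordinates and invoke the planar winding-number computation; the paper instead splits the \emph{domain of integration} into $C_{\varepsilon,\geq}$ and $C_{\varepsilon,<}$, and for the constant-density contribution $\mu(x_0)\int_{C_{\varepsilon,<}}[\underline{\mbox{curl}}_\S G_k\cdot\vec t]\,ds$ closes up $C_{\varepsilon,<}$ with a circular arc in $\Omega^c$ to form the boundary of a small cap $\Omega_\varepsilon(x_0)$, then applies the already-proved representation formula~\eqref{eq:representationformula} with $u\equiv 1$ on that cap. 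This yields the value $1$ directly (the volumetric term $k^2\!\int_{\Omega_\varepsilon}G_k$ vanishes with $\varepsilon$), and the arc integral contributes $\alpha(x_0)/(2\pi)$ by the limit computed around~\eqref{doublelayerwithz}. The paper's route is more intrinsic---no coordinate chart, no separate analysis of the remainder $R_k$, and it recycles Proposition~\ref{prop:repr} rather than redoing a Plemelj computation---whereas your route is the classical one and has the advantage of transferring verbatim to other manifolds once the logarithmic normalization is known. One small caution: your closing sentence about the sum relation (``flips the sign of the winding contribution and of the PV'') is loose; the principal-value integral $K\mu$ is defined with a fixed orientation and does not change sign depending on the side of approach, so the cancellation in $(\gamma_0^{\Omega^c}+\gamma_0^{\Omega})\widetilde{W_k}\mu=\mu$ comes from the angle terms $(1-\alpha/2\pi)+( \alpha/2\pi)$ summing to $1$ together with the correct sign of the $\Omega$-side jump, not from a sign flip in $K\mu$ itself.
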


\begin{proof}
We provide only a sketch of the proof.  Given $\varepsilon \, > \,0$,
let $x\in \Omega^{c}$ with $ \| x-x_0\|<\varepsilon$.   We use the same
notation for $C_{\epsilon,\geq}$ and $C_{\epsilon,<}$ introduced
earlier.  Then, for fixed $\e>0$, 
\begin{align}
  (\widetilde{W_k}\mu)(x)-(K_{\varepsilon} \mu)(x_0) = 
  \int_{C_{\e,\geq}}\mu(y)\left[\underline{\mbox{curl}}_{\S}\, 
  G_{k}(x,y)\,-\,\underline{\mbox{curl}}_{\S}\,G_k(x_{0},y)
  \right]\cdot\vec{t}(y)\, ds_{y} \nonumber\\
  +\int_{C_{\e,<}} \mu(y)\,\underline{\mbox{curl}}_{\S}\,
  G_k(x,y)\cdot\vec{t}(y)\,ds_{y}.
  \label{eq:Kjump1}
\end{align}
We note that the integrand of $\int_{C_{\e,\geq}} \mu(y)
\left[\underline{\mbox{curl}}_{\S}\,
G_k(x,y)\,-\,\underline{\mbox{curl}}_{\S}\,G_k(x_0,y)\right]\cdot\vec{t}(y)\,
ds_{y}$ is continuous in $x$ away from  $x_0$, and therefore 
\begin{align*}
  \lim_{x\rightarrow x_0}  \int_{C_{\e,\geq}}
  \mu(y)\left[\underline{\mbox{curl}}_{\S}\,
  G_{k}(x,y)\,-\,\underline{\mbox{curl}}_{\S}\,
  G_{k}(x_{0},y)\right]\cdot\vec{t}(y)\,ds_{y} = 0.
\end{align*}

The integral over $C_{\e,<}$ in~\eqref{eq:Kjump1} can be rewritten as
\begin{align}
  \int_{C_{\e,<}} \mu(y)\,\underline{\mbox{curl}}_{\S}\,
    G_k(x,y)\cdot\vec{t}(y)\,ds_{y} 
  = &\int_{C_{\e,<}} \left[\mu(y)-\mu(x_0)\right]\,
    \underline{\mbox{curl}}_{\S}\,G_k(x,y)\cdot\vec{t}(y)\,ds_{y}
    \nonumber \\
  + \mu(x_0)&\int_{C_{\e,<}} \underline{\mbox{curl}}_{\S}\,
    G_k(x,y)\cdot\vec{t}(y)\,ds_{y}.
  \label{interm}
\end{align}
For the first integral on the right hand side of~\eqref{interm}, using
Lemma~\ref{regularity}, we have the estimate (for a fixed $x$)
\begin{align*}
  &\left|\int_{C_{\e,<}} \left[\mu(y)-\mu(x_0)\right]\,
  \underline{\mbox{curl}}_{\S}\,G_k(x,y)\cdot\vec{t}(y)
  \,ds_{y}\right| \\
  &\qquad \leq \sup_{y \in C_{\e,<}} |\mu(y)-\mu(x_0)|
  \int_{C_{\e,<}} \left|\underline{\mbox{curl}}_{\S}\,
  G_k(x,y)\cdot\vec{t}(y)\right|\,ds_{y}\\
  &\qquad \leq M\,\cdot \mbox{length $(C_{\e,<})$}\;
  \cdot \sup_{y \in C_{\e,<}} |\mu(y)-\mu(x_0)|
\end{align*}
for some constant $M$, and hence the integral vanishes in the limit as
$\varepsilon\longrightarrow 0$.

For the second integral in~\eqref{interm}, we define
$\Omega_{\varepsilon}(x_{0}):=\left\{ y\in\Omega^{c}:\; \|x_0-y\|<
\varepsilon \right\}$ to see
\begin{align*}
  \mu(x_0)&\int_{C_{\e,<}} \underline{\mbox{curl}}_{\S}\,
    G_k(x,y)\cdot\vec{t}(y)\,ds_{y} \\
  &= \mu(x_0) \left[ \int\limits_{\partial \Omega_{\varepsilon}(x_0)} 
  \underline{\mbox{curl}}_{\S}\,G_k(x,y)\cdot\vec{t}(y)\,ds_{y} - 
  \int\limits_{\substack{y \in \Omega^{c} \\ \|y - x_{0}\| =
  \varepsilon}}
\underline{\mbox{curl}}_{\S}\,G_k(x,y)\cdot\vec{t}(y)\,ds_{y} \right].
\end{align*}
Using the representation formula~\eqref{eq:representationformula} with
$u=1$ we have
\begin{align*}
  \mu(x_0)&\int_{C_{\e,<}} \underline{\mbox{curl}}_{\S}\,
    G_k(x,y)\cdot\vec{t}(y)\,ds_{y} \\
    &=\;\mu(x_0) \left(1 - \int\limits_{\Omega_{\varepsilon}} 
    k^2 G_k(x,y) d\sigma_{y}\right) 
   - \mu(x_0) \int\limits_{\substack{y \in \Omega^{c} \\ \|y - x_{0}\| =
  \varepsilon}}
\underline{\mbox{curl}}_{\S}\,G_k(x,y)\cdot\vec{t}(y)\,ds_{y}.
\end{align*}
Without loss of generality we can set $x_0$ to be the north pole, and
compute the last integral above  to find, for all $x_{0}$: 
\begin{align*}
  \lim_{\varepsilon\rightarrow 0}
  \int\limits_{\substack{y \in \Gamma \\ \|y - x_{0}\| = \varepsilon}}
  \underline{\mbox{curl}}_{\S}\,G(x,y)\cdot\vec{t}(y)\,ds_{y}
  =\frac{\alpha(x_0)}{2\pi}.
\end{align*}
Putting the parts together we see that
\begin{align*}
  \lim_{\varepsilon\rightarrow 0} \lim_{x\rightarrow x_0}
  \left((\widetilde{W}\mu)(x)-(K_{\varepsilon}\mu)(x_0)\right)
  =\left(1-\frac{\alpha(x_0)}{2\pi}\right)\mu(x_0).
\end{align*}
\end{proof}
\qed

From Lemma~\ref{l:DLPjump}, we see that the kernel of the double-layer
operator, $\underline{\mbox{curl}}_{\S}\, G_{k}(x,y)\cdot\vec{t}(y)$,
is continuous at $x \in \Gamma$ as a function of $y$.  This allows us
to conclude that the integral operator $\widetilde{W_k}$ is a compact
operator from $L^{2}(\Gamma)$ to itself.  The compactness, in addition
to the jump $1-\alpha(x_0)/(2\pi)$ guarantees that the double-layer
potential representation will result in a second kind Fredholm integral
equation with a compact operator.

We need to record one further property of this kernel, which will be
used to understand the convergence properties of our quadrature rule in
Section~\ref{s:numerics}.

\begin{lemma}
\label{regularity}
Let $x_0,x$ be points on the sphere, connected by the smooth curve
$\Gamma$. Let $x_0$ be fixed, and let $\Gamma$ be parametrized by
$x=x(s), s\in[-A,A]$ such that $x(0)=x_{0}$. Then the kernel of the
double-layer operator is continuously differentiable in $s$, but the
second derivative is unbounded. More precisely, the function 
\begin{align*}
  f(s):=\underline{\mbox{curl}}_{\S}\,
    G_{k}(x_0,x(s))\cdot\vec{t}(x(s)), \quad s\in[-A,A], s \neq 0,
\end{align*}
has the following properties:
\begin{itemize}
\item 
\begin{align*} 
  \lim_{\substack {s \rightarrow 0}} f(s) = 
   \lim_{\substack {x \rightarrow x_{0} \\ x \in \Gamma}}
   [\underline{\mbox{curl}}_{\S}G_{k}(x_{0},x) \cdot \vec{t}(x)] =  
   -\frac{1}{4\pi} \kappa \mathbf{t}\cdot(\mathbf{n}_{p}
    \times \mathbf{x}_0)
\end{align*}

Here $\kappa$ is the principle curvature of $\Gamma$ at $x_{0}$,
$\mathbf{x}_0$ is the 3-dimensional vector associated with the point
$x_0$ (assuming the origin is located at the center of the sphere),
$\mathbf{n}_p$ is the principle normal of $\Gamma$ at $x_0$ and
$\mathbf{t}$ is a three-dimensional vector, identified with $\vec{t}$.
\item $\frac{d}{ds}f(s)$ can be extended to be well-defined and
continuous at $s=0$.
\item $\frac{d^2}{ds^2} f(s)$ is unbounded as $s\rightarrow 0$, and
therefore $f$ cannot be extended to be a $C^{2}$ function on $\Gamma$.
\end{itemize}
\end{lemma}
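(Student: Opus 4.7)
The plan is to isolate the logarithmic singularity of $G_k$ together with its full multiplicative prefactor, then apply the product rule to $\underline{\mbox{curl}}_\S$ and expand along $\Gamma$ using a spherical Frenet--Serret identity. A preliminary fact from Legendre function theory is that $Q_\nu(t) + \tfrac{1}{2}P_\nu(t)\log(1-t)$ is real-analytic in a neighborhood of $t=1$. Combined with the connection formula~\eqref{connect} and the value of $\C$, this yields the decomposition
\begin{align*}
  G_k(x,x_0) = -\frac{1}{4\pi}\,P_\nu(<x,x_0>)\log\bigl(1-<x,x_0>\bigr) + \widetilde H(x,x_0),
\end{align*}
with $\widetilde H$ real-analytic in $<x,x_0>$ near $1$. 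The essential point, which drives the third bullet, is that $P_\nu$ itself (not merely its value $P_\nu(1)=1$) multiplies the logarithm.

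Applying the product rule to the singular piece, and using $\underline{\mbox{curl}}_\S f = (\nabla_\S f)\times x$ together with $\nabla_\S <x,x_0> = x_0 - <x,x_0>\,x$, one obtains
\begin{align*}
  \underline{\mbox{curl}}_\S\bigl[P_\nu(<x,x_0>)\log(1-<x,x_0>)\bigr]\cdot\vec{t}(x) &= P_\nu(<x,x_0>)\,\frac{(x\times x_0)\cdot\vec{t}(x)}{1-<x,x_0>} \\
  &\quad - P_\nu'(<x,x_0>)\,\bigl[(x\times x_0)\cdot\vec{t}(x)\bigr]\log\bigl(1-<x,x_0>\bigr).
\end{align*}
Thus $f(s)$ splits into a rational part $A(s)$, a log-weighted part $B(s)$, and a $C^\infty$ remainder from $\underline{\mbox{curl}}_\S\widetilde H \cdot \vec{t}$.

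For the Taylor expansion I would use the arclength parametrization $x(s)$ of $\Gamma$ with $x_0=x(0)$; the constraint $|x(s)|=1$ forces $\ddot x(0) = -x_0 + \kappa_g(0)\,\vec{N}_0$ with $\vec{N}_0 = x_0\times\vec{t}_0$ and $\kappa_g$ the geodesic curvature of $\Gamma$ at $x_0$. A short computation using the orthogonalities $\vec{t}_0\cdot x_0 = \vec{N}_0\cdot x_0 = 0$ and $\vec{N}_0\times x_0 = \vec{t}_0$ yields
\begin{align*}
  (x(s)\times x_0)\cdot\vec{t}(x(s)) = -\tfrac{\kappa_g(0)}{2}\,s^2 + O(s^3), \qquad 1-<x(s),x_0> = \tfrac{s^2}{2} + O(s^4),
\end{align*}
so $A(s)$ extends smoothly with $A(0) = -\kappa_g(0)$, and the 3D Frenet relation $\kappa\,\vec{n}_p = \ddot x(0)$ identifies $\kappa_g(0) = \kappa\,\mathbf{t}\cdot(\mathbf{n}_p\times\mathbf{x}_0)$, yielding the first bullet. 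Using $P_\nu(1)=1$ and $P_\nu'(1) = \nu(\nu+1)/2 = -k^2/2$ (from~\eqref{hyperG}), one finds
\begin{align*}
  B(s) = -\tfrac{k^2\kappa_g(0)}{2}\,s^2\log|s| + O(s^2),
\end{align*}
so $f(s)$ contains a summand proportional to $s^2\log|s|$.

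Since $s^2\log|s|$ is $C^1$ at $s=0$ (its derivative $2s\log|s|+s$ tends to $0$), while $A$ and the $\widetilde H$-remainder are $C^\infty$ near $s=0$, the combined expression has a continuously extending first derivative, establishing the second bullet. However the second derivative of $s^2\log|s|$ is $2\log|s|+3$, which is unbounded, and no $C^\infty$ summand can cancel this divergence; hence $f''(s)$ grows like $\log|s|$ whenever $\kappa_g(0)\neq 0$, giving the third bullet. The main obstacle is Step~1: a naive splitting $G_k = -\tfrac{1}{4\pi}\log(1-<x,x_0>) + H$ with $H$ declared smooth would hide the $s^2\log|s|$ contribution, because the missing term $(P_\nu(t)-1)\log(1-t)$ is continuous at $t=1$ but is exactly what obstructs $C^2$-regularity.
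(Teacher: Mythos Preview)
Your argument is correct and takes a genuinely different route from the paper's. The paper works directly with the expression $f(s)=-\C P'_\nu(z)(\mathbf{x}_0-\mathbf{x})\cdot(\mathbf{t}\times\mathbf{x})$, rewrites $P'_\nu$ via the recurrence~\eqref{difflegp}, and then establishes the first two bullets by repeated application of l'H\^opital's rule to the geometric factor $q(x,x_0)=\frac{(\mathbf{x}_0-\mathbf{x})}{\|\mathbf{x}_0-\mathbf{x}\|^2}\cdot(\mathbf{t}\times\mathbf{x})$ and direct differentiation; the third bullet is only asserted (``calculations similar to those above''), not carried out. Your approach instead invokes the Frobenius structure of $Q_\nu$ at $t=1$ to split $G_k$ as $-\tfrac{1}{4\pi}P_\nu(t)\log(1-t)+\widetilde H(t)$ with $\widetilde H$ analytic, and then reads off the regularity from the Taylor expansion of the geometric factor along~$\Gamma$.

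The payoff of your decomposition is that it pinpoints the obstruction: the leading non-smooth contribution to $f(s)$ is precisely a multiple of $k^{2}\kappa_g(0)\,s^{2}\log|s|$, which immediately explains both why $f\in C^{1}\setminus C^{2}$ and why the trapezoid rule in Section~\ref{s:numerics} achieves exactly third-order accuracy. The paper's l'H\^opital computation gives the limit values more mechanically but does not isolate this $s^{2}\log|s|$ term, and in particular leaves the third bullet unverified. Your caveat that the $\log|s|$ blow-up of $f''$ requires $\kappa_g(0)\neq 0$ is an honest observation that the paper's statement silently assumes; at a point where $\Gamma$ is tangent to a geodesic the $s^{2}\log|s|$ coefficient vanishes and one would need to examine the next term $s^{3}\log|s|$, which is $C^{2}$.

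One small point worth tightening: when you assert that $A(s)$ and the $\widetilde H$-remainder are $C^\infty$ near $s=0$, you are using that a smooth function vanishing to order two at $s=0$ divided by $s^{2}$ remains smooth (Hadamard's lemma), and that $t(s)=\langle x(s),x_0\rangle$ is a smooth function of $s$; both are true and routine, but stating them explicitly would close the argument completely.
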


\begin{proof}

We provide a simple argument via l'H\^{o}pital's rule in the case that
$\Gamma$ is a simple smooth closed curve. For this argument, it is
easier  to work with points and vectors in $\mathbb{R}^3$.  Let $x_0$,
$x$ be points on the unit sphere. We identify them with the
3-dimensional vectors $\mathbf{x}_{0}$, $\mathbf{x}$ respectively.  Let
$\mathbf{t}$ and $\mathbf{n}$ be the unit tangent and normal vectors
at the point $\mathbf{x} \in \Gamma$ lying in the tangent plane of
$\mathcal{S}$.  We note for future reference that
$\mathbf{x}=\mathbf{e_r}$, $\mathbf{x} \cdot \mathbf{x} = 1$ and that
$\mathbf{x} = \mathbf{n} \times \mathbf{t}$. Since $\Gamma$ is
parametrized by arc length $s$, we also note the following identities:

\begin{align*}
  \mathbf{t} = \frac{d\mathbf{x}}{ds}, \qquad 
  \frac{d\mathbf{t}}{ds} = \kappa \, \mathbf{n}_{p}.
\end{align*}
The second identity is one of the Frenet formulae, where $\kappa$ is
the curvature of the curve $\Gamma$ at the point $\mathbf{x}$ and
$\mathbf{n}_{p}$ is the principal normal to the curve.  From these, it
is straightforward to show that 
\begin{align*}
  \frac{d \, }{ds} \left( \mathbf{t} \times \mathbf{x} \right)=
  \kappa \mathbf{n}_{p} \times \mathbf{x} . 
\end{align*}

We now examine the kernel of the double-layer potential. Calculating the
three dimensional gradient of the fundamental solution $G$ yields
\begin{align*}
  \nabla G_k(x_0,x) &= \C \nabla P_\nu\left(
    \frac{\|x_0-x\|^2}{2}-1\right) = 
  -\C P'_\nu\left(\frac{\|x_0-x\|^2}{2}-1\right)
    (\mathbf{x}_{0}-\mathbf{x}),
\end{align*}
which we can decompose into the surface gradient plus a derivative in
the radial direction. We can write this more concisely as
\begin{align*}
\nabla G_k(x_0,x)=-\C P'_{\nu}(z) (\mathbf{x}_0-\mathbf{x}), 
  \qquad \mbox{where}\qquad z=\frac{\|{ x_0}-{x}\|^2}{2}-1.
\end{align*} 
The kernel of the double-layer operator is $f(s): =
[\underline{\mbox{curl}}_{\S}G_{k}(x_{0},x) \cdot \vec{t}(x)]=
\frac{\partial}{\partial n} G_k({x}_{0},{x})$, which in turn can be
written as 
\begin{align*}
 f(s)=\frac{\partial \, }{\partial n} G_{k}(x_{0},x) 
 =\nabla G_{k} \cdot \mathbf{n} = \nabla G_k\cdot \left( \mathbf{t}
 \times \mathbf{x} \right) 
 =-\C (\mathbf{x}_0-\mathbf{x})P'_{\nu}(z)\cdot \left(
 \mathbf{t} \times \mathbf{x} \right).
\end{align*}
Recalling that $P'_{\nu}(z)=\frac{-(\nu+1)}{1-z^2}(P_{\nu+1}(z)-z
P_{\nu}(z))$~\eqref{difflegp},
\begin{align}
  f(s) &= \C \frac{(\nu+1)}{1-z^2}\left(
  P_{\nu+1}(z)-zP_{\nu}(z)\right) 
  (\mathbf{x}_0-\mathbf{x})\cdot \left(
  \mathbf{t} \times {\mathbf x} \right) \nonumber\\
  &= 2\C(\nu+1)\frac{P_{\nu+1}(z)-zP_{\nu}(z)}{1-z} \left[
  \frac{(\mathbf{x}_0-\mathbf{x})}{\|\mathbf{x}_{0}-\mathbf{x}\|^2}
  \cdot \left( \mathbf{t} \times \mathbf{x} \right)\right].
  \label{kern1}
\end{align} 

Now, the quantity
\begin{align*}
  2\C(\nu+1)\frac{P_{\nu+1}(z)-zP_{\nu}(z)}{1-z}
  =4\C(\nu+1)\frac{P_{\nu+1}(z)-zP_{\nu}(z)}{4-\|{x_0}-{x}\|^2},
\end{align*}
has a well-defined limit as $x \rightarrow
x_{0}$~\eqref{doublelayerwithz}. In fact,
following~\eqref{doublelayerwithz}
\begin{align*}
  \lim_{x\rightarrow x_0} 
  \C[\underline{\mbox{curl}}_{\S}G_{k}(x_{0},x) \cdot \vec{t}(x)] 
  = \C(\nu+1)\lim_{z\rightarrow 1^-}
  P_{\nu+1}(-z)+zP_{\nu}(-z)) = -\frac{1}{2\pi}.
\end{align*}

The remaining term in~\eqref{kern1} is $q(x,x_0) :=
\frac{(\mathbf{x}_{0}-\mathbf{x})}{\|\mathbf{x}_{0}-\mathbf{x}\|^2}
\cdot \left( \mathbf{t} \times \mathbf{x} \right).$  This term can be
shown to be continuous as $\mathbf{x} \rightarrow \mathbf{x}_{0}$ by
l'H\^{o}pital's rule.  A first application of l'H\^{o}pital's rule to
evaluate $q(x,x_0)$ at the point of singularity, $x = x_{0}$, 
\begin{align*}
  \lim_{x \rightarrow x_{0}}q(x,x_{0}) & = 
  \lim_{x \rightarrow x_{0}} 
  \frac{ -\mathbf{t} \cdot \left(\mathbf{t} \times \mathbf{x} \right)
  + (\mathbf{x}_{0}-\mathbf{x}) \cdot \left( \kappa \mathbf{n}_{p}
  \times \mathbf{x} \right)}
  {-2(\mathbf{x}_{0} - \mathbf{x}) \cdot \mathbf{t}} \\
  & = \lim_{\mathbf{x} \rightarrow \mathbf{x}_{0}} 
  \frac{(\mathbf{x}_{0}-\mathbf{x}) \cdot \left(
  \kappa\mathbf{n}_{p} \times \mathbf{x}\right)}
  {(\mathbf{x}_{0} - \mathbf{x}) \cdot \mathbf{t}}. 
\end{align*}
Proceeding with a second application of l'H\^{o}pital's rule yields
\begin{align*}
  \lim_{x \rightarrow x_{0}} q(x,x_0)  
  & = \lim_{x \rightarrow x_{0}} \frac{-\mathbf{t} \cdot 
  \left( \kappa \mathbf{n}_{p} \times \mathbf{x} \right) +
  \mathcal{O}(\mathbf{x}-\mathbf{x})}
  {-2\mathbf{t} \cdot \mathbf{t} + \mathcal{O}(\mathbf{x}-\mathbf{x})}
  = \frac{1}{2} \mathbf{t} \cdot \left( \kappa \mathbf{n}_{p} \times 
  \mathbf{x}_{0} \right).
\end{align*}
This shows that $f(s)$ has a well-defined limit as $s\rightarrow 0$,
that is, as $x \rightarrow x_0$. The kernel of the double-layer
operator can therefore be made continuous in the arc-length parameter.

If we now examine $\frac{d}{ds}f(s)$, we obtain
\begin{align*} 
  f'(s)&= -\C \frac{d}{ds}\left( P'_\nu(z)(\mathbf{x_0}-\mathbf{x}) \cdot 
    \mathbf{n} \right) \\
  &=-C_{k} \left\{P''_{\nu}(z) \frac{dz}{ds}
    (\mathbf{x}_{0}-\mathbf{x})  \cdot \mathbf{n} - P'_{\nu}(z)
    (-\mathbf{t} \cdot \mathbf{n}) + P'_\nu(z)((\mathbf{x}_{0} -
    \mathbf{x}) \cdot \frac{d}{ds}\mathbf{n})\right\}.
 \end{align*}
Since $P_\nu(z)$ solves Legendre's equation~\eqref{LegendrePequation}
and since $\mathbf{x},\mathbf{t}, \mathbf{n}$ form an orthogonal set,
we have
\begin{align*}
  \lim_{\substack {s \rightarrow 0}} f'(s) =-C_k
  \left\{\frac{2zP'_{\nu}(z)-\nu(\nu+1)P_{\nu}(z)}{1-z^2}
  \frac{dz}{ds}(\mathbf{x_0}-\mathbf{x})\cdot \mathbf{n} + \kappa
  P'_\nu(z)((\mathbf{x}_{0}-\mathbf{x})\cdot (\mathbf{n}_{p} \times 
  \mathbf{x})\right\}
\end{align*}
Applying L'H\^{o}pital's rule to each of the terms above, we see that
$f'(s)$ has a well-defined and bounded limit at $s \rightarrow 0$.
Therefore, the kernel of the double-layer operator is differentiable in
the arc length parameter, with bounded derivative as $s \rightarrow 0$. 

However,
\begin{align*} 
  f''(s) &= -\C \frac{d^2}{ds^2}\left( P'_\nu(z) 
  (\mathbf{x}_{0}-\mathbf{x}) \cdot \mathbf{n} \right)
\end{align*}
is not bounded as $s\rightarrow 0$. This can be shown using calculations
similar to those above, and we do not include them here.
\end{proof}
\qed

\section{Numerical Examples}
\label{s:numerics}
In this section, we apply standard numerical methods to solve the
Fredholm integral equation of the second kind
\begin{align}
  \label{e:dlpBIE}
  \frac{1}{2}\mu(x_{0}) + \int_{\Gamma}
    [\underline{\mbox{curl}}_{\S}G_{k}(x_{0},y) \cdot
    \vec{t}(y)] \mu(y)ds_{y} = g(x_{0}), \quad x_{0} \in \Gamma,
\end{align}
and to evaluate the double-layer potential
\begin{align}
  \label{e:dlpuEval}
  u(x) = \int_{\Gamma} [\underline{\mbox{curl}}_{\S}G_{k}(x,y) 
    \cdot \vec{t}(y)] \mu(y)ds_{y}, \quad x \in \Omega,
\end{align}
where
\begin{align*}
  G_{k}(x,y) = \C P_{\nu(k)} \left(
    \frac{\|x - y\|^{2}}{2} - 1\right), \quad 
  \C=-\frac{1}{4\sin(\nu\pi)}.
\end{align*}
We have assumed that the boundary $\Gamma$ of the geometry is a smooth
function so that $\alpha(x_{0}) = \pi$, where $\alpha$ is defined in
Lemma~\ref{l:DLPjump}.

In~\eqref{e:dlpuEval}, since $x \notin \Gamma$, the integrand is
periodic and smooth.  Therefore, for a fixed $x\in \Omega$, the
trapezoid rule has spectral accuracy.  However, since the error grows as
$x$ approaches $\Gamma$, our reported errors are only measured at points
$x$ sufficiently far from $\Gamma$.  We test two quadrature formulas for
solving~\eqref{e:dlpBIE}.  First, we test the trapezoid rule which we
expect will achieve third-order accuracy since the integrand is once
continuously differentiable (Lemma~\ref{regularity}).  Second, we test a
high-order hybrid Gauss-trapezoidal quadrature formula designed for
functions that contain logarithmic singularities~\cite{alpert}.

For all the examples, we discretize each connected component of the
boundary with $N$ unknowns and solve the resulting linear system with
unrestarted GMRES and a tolerance of $10^{-11}$.  The error of the
Alpert quadrature formula is $\mathcal{O}(h^{16}\log h)$, and we use
Fourier interpolation to assign values to the density function $\mu$ at
points that are intermediate to the regular grid.

We present four numerical examples which we now summarize.
\begin{itemize}
  \item{\em{The effect of the quadrature rule}:} For a two-ply connected
  domain, we report a convergence study for the two quadrature formulas.
  We also establish that the number of GMRES iterations is independent
  of the mesh size.

  \item{\em{The effect of $k$}:} For the same two-ply connected domain,
  we examine the effect of the parameter $k$ on the condition number of
  the linear system corresponding to~\eqref{e:dlpBIE}, and its effect on
  the number of GMRES iterations.

  \item{\em{The effect of the geometry's curvature}:} We consider a
  simply-connected domain and vary the aspect ratio of the major to
  minor axis of the domain's boundary.  We examine the effect of this
  parameter on the conditioning and the number of GMRES iterations.

  \item{\em{A complex domain}:} We demonstrate that our method is able
  to solve the Yukawa-Beltrami equation in complex domains by
  solving~\eqref{DBVP} in a 36-ply connected domain, with an acceptable
  number of GMRES iterations.
\end{itemize}

\subsection{The effect of the quadrature rule}
We consider the two-ply connect geometry illustrated in
Figure~\ref{f:twoply}.  An exact solution is formed by taking the
Dirichlet boundary condition corresponding to the sum of two fundamental
solutions centered inside the two islands.  In Table~\ref{t:example1},
we report the number of GMRES iterations (this was independent of the
quadrature formula).  We see that the number of GMRES iterations is
independent of the mesh size, the error of the trapezoid rule has
third-order accuracy, and the error of the Alpert quadrature formula
quickly decays to the GMRES tolerance.

\begin{figure}[htps]
  \includegraphics[width=0.5\textwidth]{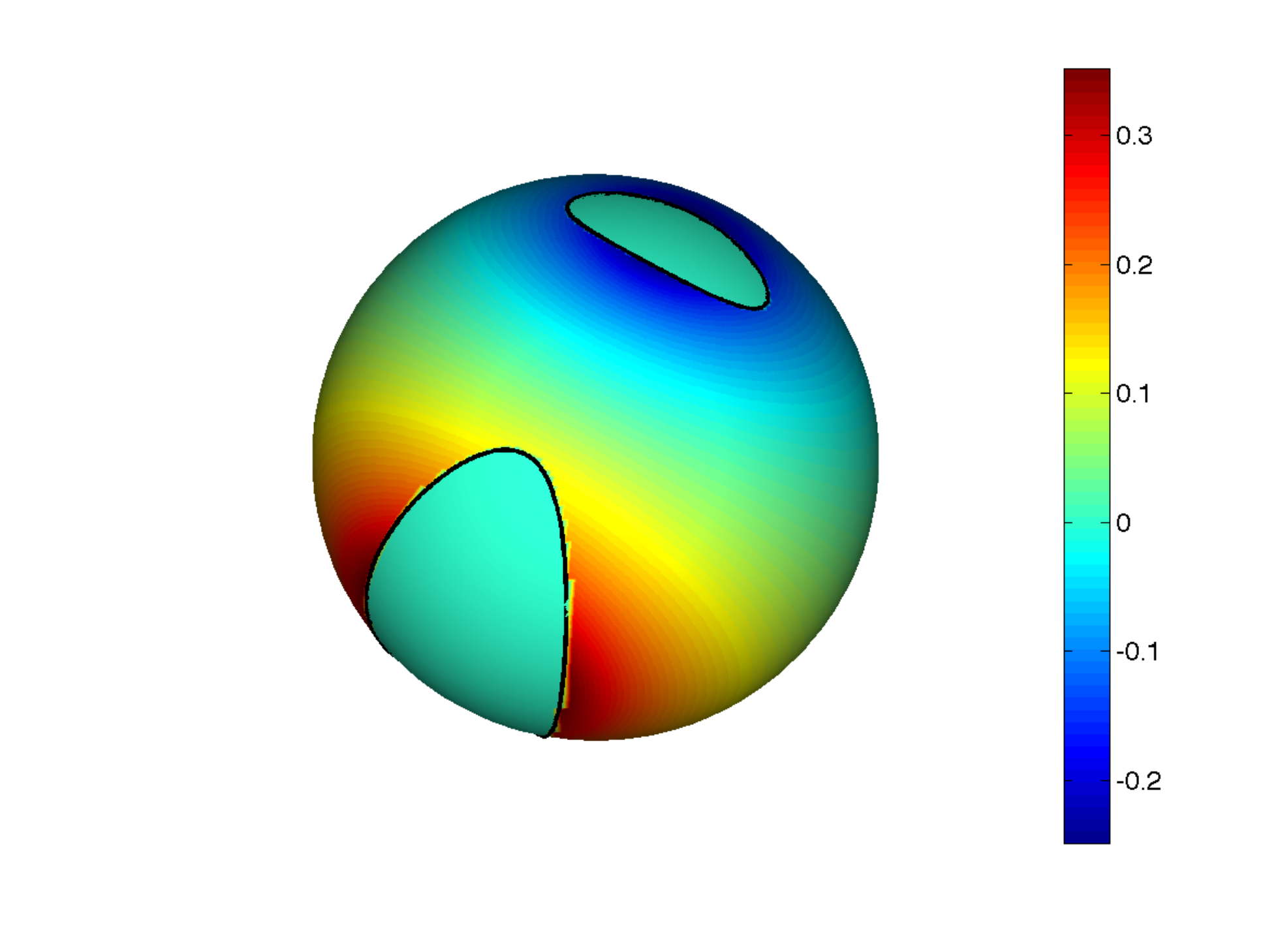}
  \includegraphics[width=0.5\textwidth]{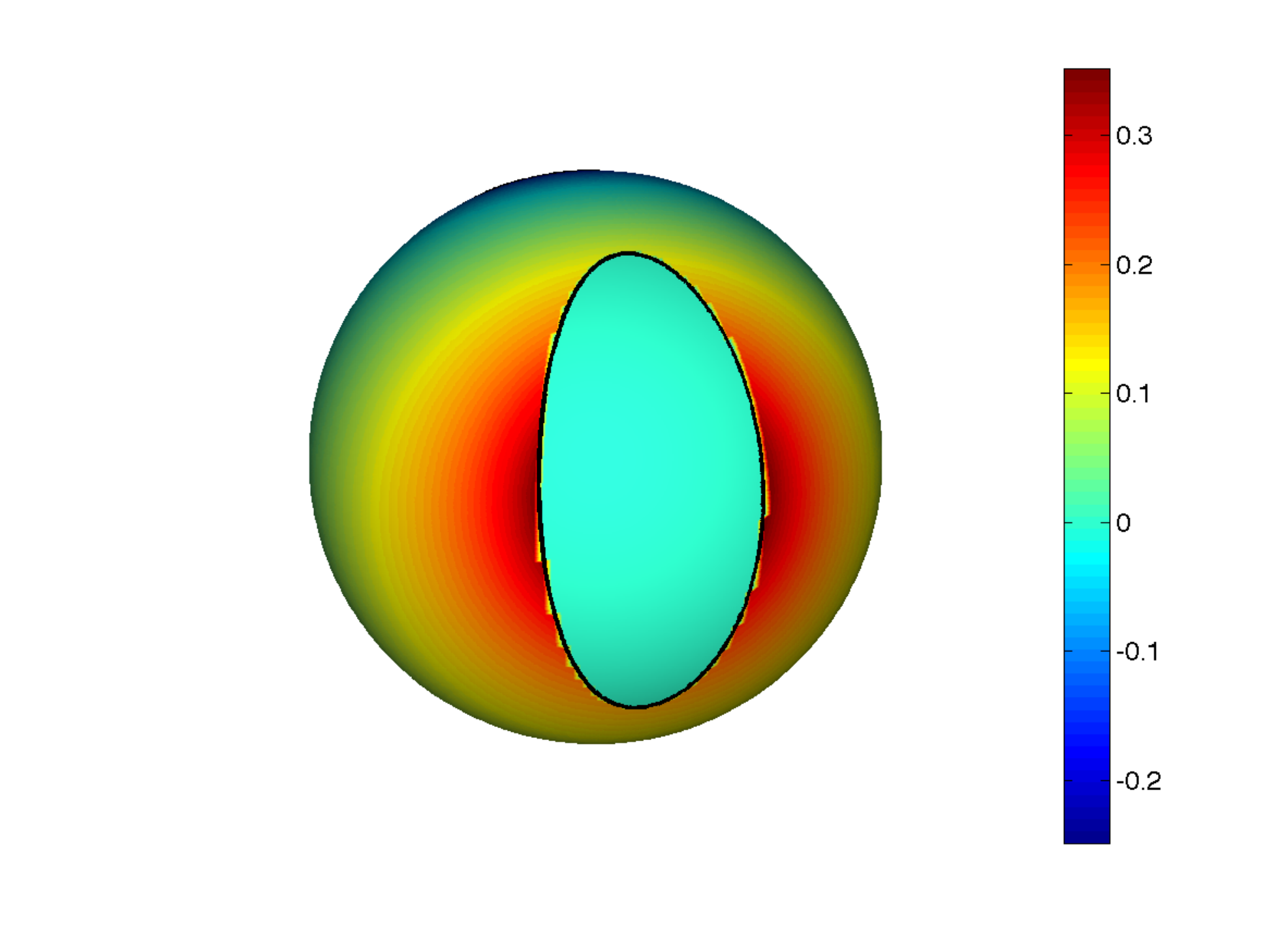}
\caption{\label{f:twoply} A numerical solution of the Yukawa-Beltrami
equation with $k=4$ in a two-ply connected domain viewed from two
different angles.  The exact solution is the sum of two fundamental
solutions, each one centered inside the regions bounded by $\Gamma$}
\end{figure}

\begin{table}[htps]
\caption{\label{t:example1} The number of GMRES iterations and the
error at a collection of points sufficiently far from $\Gamma$.}
\centering
\begin{tabular*}{0.8\textwidth}{@{\extracolsep{\fill}}lllll}
$N$ & \# GMRES & Trapezoid Error & Alpert Error  \\
\hline\noalign{\smallskip}
32   & 9 & 6.67E-5 & 5.54E-6  \\
64   & 9 & 7.84E-6 & 9.66E-10 \\
128  & 9 & 9.86E-7 & 2.28E-11 \\
256  & 9 & 1.24E-7 & 3.68E-11 \\
512  & 9 & 1.59E-8 & 1.76E-10 \\  
1024 & 9 & 1.84E-9 & 1.20E-10 \\ 
\noalign{\smallskip}\hline
\end{tabular*}
\end{table}

\subsection{The effect of $k$}
We consider the same two-ply connected geometry illustrated in
Figure~\ref{f:twoply}.  We solve~\eqref{e:dlpBIE} for varying values of
$k > 1/2$ using Alpert's quadrature rule with $N=32$.  We report the
condition number of the resulting linear system and the required number
of GMRES iterations in Table~\ref{t:example2}.  We see that for larger
values of $k$, the conditioning of the linear system improves, and the
number of GMRES iterations decreases.  In Figure~\ref{f:evalues}, we
plot the eigenvalues of the linear system for $k=1$ and $k=64$.  We see
that for larger values of $k$, the eigenvalues cluster more strongly
around $1/2$ resulting in a smaller number of GMRES iterations and a
smaller condition number.

\begin{table}[htps]
\caption{\label{t:example2} The condition number of the linear system
corresponding to a discretization of~\eqref{e:dlpBIE} and the number of
GMRES iterations as a function of the parameter $k$.}
\centering
\begin{tabular*}{0.8\textwidth}{@{\extracolsep{\fill}}lll}
$k$ & Condition Number & \# GMRES \\
\hline\noalign{\smallskip}
0.51 & 4.22E1 & 14 \\
1    & 1.22E1 & 13 \\
2    & 4.50E0 & 11 \\
4    & 2.17E0 & 9  \\
8    & 1.45E0 & 8  \\  
16   & 1.24E0 & 7  \\ 
32   & 1.13E0 & 6  \\
64   & 1.07E0 & 6  \\
\noalign{\smallskip}\hline
\end{tabular*}
\end{table}

\begin{figure}[htps]
\centering
\begin{tabular}{cc}
\includegraphics[width=0.5\textwidth]{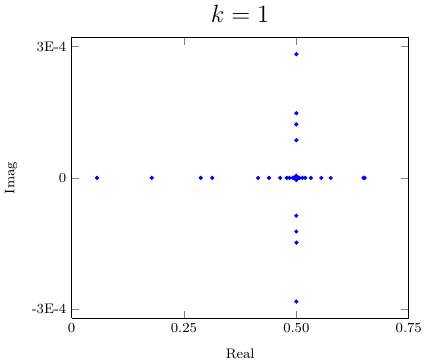} &
\includegraphics[width=0.5\textwidth]{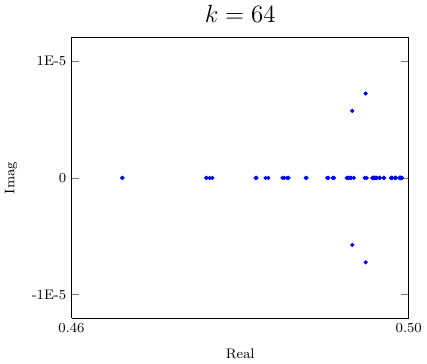}
\end{tabular}
\caption{\label{f:evalues} The eigenvalues of a discretization
of~\eqref{e:dlpBIE} for $k=1$ (left) and $k=64$ (right)}
\end{figure}

\subsection{The effect of the geometry's curvature}
We let $\Omega$ be exterior of an ellipse with a varying aspect ratio
of its major and minor axis.  The boundary $\Gamma$ is discretized with
$N=512$ points and is parameterized by $x(\alpha) = a\cos(\alpha)$,
$y(\alpha) = b\sin(\alpha)$, and $z(\alpha) =
\sqrt{1-x^{2}(\alpha)-y^{2}(\alpha)}$, where $a = 0.8$ and $b$ is
varied in Table~\ref{t:example3}.  We see that the curvature does have
an effect on the condition number of the corresponding linear system as
well as on the required number of GMRES iterations.

\begin{table}[htps]
\caption{\label{t:example3} The condition number of the linear system
corresponding to~\eqref{e:dlpBIE} and the number of required GMRES
iterations with respect to the ratio $a/b$.}
\centering
\begin{tabular*}{0.8\textwidth}{@{\extracolsep{\fill}}lll}
$\frac{a}{b}$ & Condition Number & \# GMRES \\
\hline\noalign{\smallskip}
1   & 1.11E0 & 2   \\ 
2   & 1.51E0 & 6   \\
4   & 2.88E0 & 8   \\
8   & 5.95E0 & 11  \\
16  & 1.23E1 & 19  \\
32  & 2.52E1 & 35  \\
64  & 4.72E1 & 54  \\
128 & 1.51E2 & 83  \\
256 & 1.43E3 & 227 \\
\noalign{\smallskip}\hline
\end{tabular*}
\end{table}

\subsection{A complex domain}
We take a 36-ply connected domain and set the boundary condition to be a
constant value of one everywhere.  We set the parameter of the PDE to be
$k=4$ and each boundary is discretized with $N=32$ points.  The
resulting linear system has 1152 unknowns, a condition number of $7.58$,
and GMRES requires 25 iterations to reach the desired tolerance of 11
digits.  A plot of the solution is in Figure~\ref{f:36ply}.

\begin{figure}
  \centering
  \includegraphics[width=0.5\textwidth]{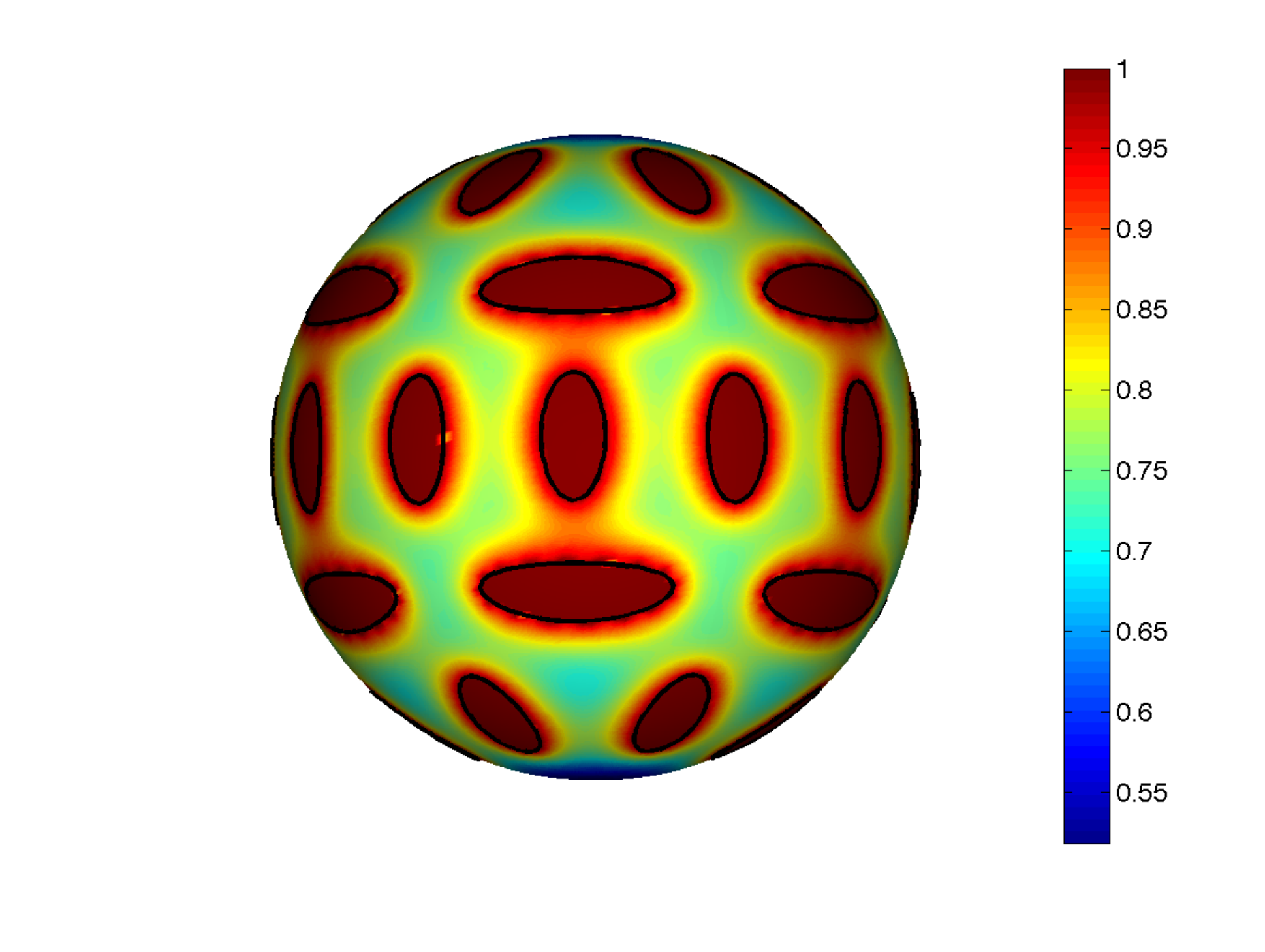}
  \caption{\label{f:36ply} The numerical solution of the Yukawa-Beltrami
  equation with $k=4$ in a 36-ply connected domain.  The Dirichlet
  boundary condition is equal to one on each of the boundaries (black
  curves)}
\end{figure}

\section{Conclusions and Discussion}
We have presented an integral equation strategy to solve the Dirichlet
boundary value problem for the Yukawa-Beltrami equation on a
multiply-connected, sub-manifold of the unit sphere.  The integral
equation formulation is based on a representation of a particularly
useful form of a parametrix for the Yukawa-Beltrami operator involving
conical functions.  Using a double-layer ansatz based on this
parametrix, a well-conditioned Fredholm equation of the second kind
arises.  Numerical experiments confirm the analytic properties of this
integral equation and by selecting appropriate quadrature rules, we are
able to compute highly accurate solutions.  This integral equation
formulation is amenable to acceleration either by a fast multipole
method or a fast direct solver; this is future work.  

The Yukawa-Beltrami equation arises when a temporal discretization is
applied to the heat equation.  However, the solution of~\eqref{bigmodel}
requires solving both a forced and a homogeneous problem.  While the
present work is designed to solve the homogeneous problem, future work
involves using volume potentials to form solutions to the forced
problem, as is done in~\cite{rothe:heat} for the heat equation in the
plane.

Other future work includes extending the presented methods to other
elliptic PDEs such as the Helmholtz or Stokes equations, and also to
other two-dimensional manifolds.  This will potentially create a new
class of methods for solving problems involving scattering or fluid
mechanics on the surface of smooth manifolds.

\begin{acknowledgements}
Supported in part by grants from the Natural Sciences and Engineering
Research Council of Canada. NN gratefully acknowledges support from the
Canada Research Chairs Council, Canada.
\end{acknowledgements}

\bibliographystyle{spmpsci}
\bibliography{refs}

\end{document}